\newtheorem{thm}{Theorem}[section]
\newtheorem{lem}[thm]{Lemma}
\newtheorem{prop}[thm]{Proposition}
\numberwithin{equation}{section}
\newcommand{\bel}{\begin{equation} \label}
\newcommand{\ee}{\end{equation}}
\def\beq{\begin{equation}}
\def\eeq{\end{equation}}
\newcommand{\bea}{\begin{eqnarray}}
\newcommand{\eea}{\end{eqnarray}}
\newcommand{\beas}{\begin{eqnarray*}}
\newcommand{\eeas}{\end{eqnarray*}}
\newcommand{\pd}{\partial}
\newcommand{\re}{\mathfrak R}
\newcommand{\R}{\mathbb{R}}
\newcommand{\A}{\mathcal{A}}
\newcommand{\C}{\mathbb{C}} 
\newcommand{\D}{\mathcal D}
\newcommand{\E}{\mathcal E}
\newcommand{\CI}{\mathcal{C}}
\newcommand{\N}{\mathbb{N}}
\newcommand{\M}{\mathcal{M}}
\newcommand{\g}{\bar{g}}
\renewcommand{\div}{\mathrm{div}\,}  
\def\inter{\text{int}}
\newcommand{\pair}[1]{\langle #1 \rangle}
\newcommand{\supp}{\mathrm{supp}\,}  
\def\epsilon{\varepsilon}
\def\phi {\varphi}
\newcommand{\abs}[1]{\lvert#1\rvert}
\renewcommand{\leq}{\leqslant}
\renewcommand{\geq}{\geqslant}
\providecommand{\abs}[1]{\left\lvert#1\right\rvert}
\providecommand{\norm}[1]{\left\lVert#1\right\rVert}
\DeclareMathOperator{\dis}{dist}
\numberwithin{equation}{section}
\title[Dirichlet to Neumann map]{Recovery of non-smooth coefficients appearing in anisotropic wave equations}
\keywords{Dirichlet to Neumann map, inverse problems, time-dependent coefficients, non-smooth parameters,  simple manifolds, light ray transform, magnetic potential}
\author[]{Ali Feizmohammadi}
\address{Department of Mathematics, University College London, London, UK-WC1E  6BT, United Kingdom}
\email{a.feizmohammadi@ucl.ac.uk}
\author[]{Yavar Kian}
\address{Aix Marseille Universit\'{e}, Universit\'{e} de Toulon, CNRS, CPT, Marseille, France}
\email{yavar.kian@univ-amu.fr}
\begin{document}
\maketitle

\begin{abstract}
We study the problem of unique recovery of a non-smooth one-form $\A$ and a scalar function $q$ from the Dirichlet to Neumann map, $\Lambda_{\A,q}$, of a hyperbolic equation on a Riemannian manifold $(M,g)$. We prove uniqueness of the one-form $\A$ up to the natural gauge, under weak regularity conditions on $\A,q$ and under the assumption that $(M,g)$ is simple. Under an additional regularity assumption, we also derive uniqueness of the scalar function $q$. The proof is based on the geometric optic construction and inversion of the light ray transform extended as a Fourier Integral Operator to non-smooth parameters and functions.
\end{abstract} 
\tableofcontents
\section{Introduction}
Let $T>0$, and let $(M,g)$ denote a compact connected smooth $n$-dimensional Riemannian manifold with smooth boundary $\pd M$. We consider the Lorentzian manifold $(\M,\g)$ defined as $\M=(0,T)\times M$ with the metric $\g=-(dt)^2+g$. Let $\div_{\g}$ (resp., $\nabla^{\g}$) denote the divergence operator (resp., gradient operator) on $(\M,\g)$ and define the Laplace-Beltrami operator associated to $(\M,\g)$ through $\Delta_{\g}\cdot=\div_{\g}\nabla^{\g}\cdot$. In local coordinates $(t=x^0,x^1,\ldots,x^n)=(t,x)$, we have
$$\Delta_{\g}\cdot=\sum_{i,j=0}^n \frac{1}{\sqrt{|\g|}}\pd_i(\sqrt{|\g|}{\g}^{ij}\pd_j\cdot)=(-\pd^2_t+\Delta_{g})\cdot,$$
where $\Delta_g$ is analogously defined on $(M,g)$. In this paper, we will make the standing assumption that $(M,g)$ is {\em simple}, that is to say, it is simply connected, any geodesic in $M$ has no conjugate points and the boundary $\pd M$ is strictly convex in the sense that the second fundamental form is positive for every point on the boundary. Any two points in a simple manifold can be connected through a unique geodesic.

We consider a scalar function $q$ and a one-form $\A$ on $(\M,\g)$. In local coordinates, we have
\bel{Alocal}\A(t,x)=b(t,x) \,dt+\sum_{i=1}^na_j(t,x)\,dx^j= b(t,x)\,dt+A(t,x),\ee
where $A$ is a time-dependent one-form on $(M,g)$. Throughout this paper we impose the following regularity assumptions on these coefficients:
\bel{regularity}
\begin{aligned}
&\A \in W^{1,1}(0,T;L^{2}(M;T^*\M))\, \cap\, \CI(\M;T^*\M)\\
&\div_{\g}\, \A \in L^{p_1}(0,T;L^{p_2}(M)),\\
& q \in L^{p_1}(0,T;L^{p_2}(M)).
\end{aligned}
\ee
where $p_1>1$ and $p_2 \in [n,\infty]\setminus \{2\}$. We consider the initial boundary value problem (IBVP)
\bel{eq1}
\left\{ \begin{array}{ll} L_{\A,q}u:= -\Delta_{\g} u + \A\nabla^{\g} u+ q u  =  0, & \mbox{on}\ \M,\\  
u  =  f, & \mbox{on}\ (0,T)\times \pd M,\\  
 u(0,\cdot)  =0,\quad  \pd_tu(0,\cdot)  =0 & \mbox{on}\ M,
\end{array} \right.
\ee 
This problem is well-posed for any $f \in H^1_0((0,T]\times \pd M)$ (see Section~\ref{Forwardproblem}) and admits a unique solution $u$ in the energy space
\bel{energyspace}
\mathscr{X}:=\CI^1(0,T;L^2(M)) \cap \CI(0,T;H^1(M)). 
 \ee 
We define the Dirichlet-to-Neumann (DN in short) map 
\bel{dnmapspace}
\Lambda_{\A,q}:H^{1}_0((0,T]\times \pd M)\ni f\mapsto \left(\partial_{\bar\nu} u-\frac{\A\bar{\nu}}{2}u\right)\,|_{(0,T)\times \pd M}\in L^2((0,T)\times \pd M)
\ee 
for equation \eqref{eq1}. Here $\bar{\nu}$ represents the outward normal unit vector to $(0,T)\times \pd M$. We refer the reader to Sections~\ref{Forwardproblem}-\ref{DNformulation} for a rigorous presentation of the direct problem \eqref{eq1} and this formulation of the DN map. In this paper, we are interested in determining the unknown complex valued coefficients $\A,q$, given the map $\Lambda_{\A,q}$, up to the natural obstructions for this problem as discussed in \cite[Section 1.2]{FIKO}. 

\subsection{Main results}
\label{main}
Before stating the main theorem, we need to define the set $\mathcal E\subset \M$ where we recover the coefficients. Let us define the domain of influence 
$$\mathcal D:=\{(t,x)\in \M\,|\, \dis{(x,\pd M)}<t<T-\dis{(x,\pd M)}\}.$$
By finite speed of propagation, no information can be obtained about the coefficients $\A,q$ from $\Lambda_{\A,q}$ on the set $\M\setminus \mathcal D$. Thus, $\mathcal D$ represents the maximal set where one can, in theory, recover the coefficients. Now, for $T>2\,\textrm{Diam}(M)$, we start by fixing a subset of $\mathcal D$ given by
$$ \mathcal E:=\{(t,x)\in \M\,|\, D_g(x)<t<T-D_g(x)\},$$
where $D_g(x)$ denotes the length of the longest geodesic passing through the point $x$ in $M$. Since $(M,g)$ is simple, this is a well-defined continuous function on $M$. With the definition of $\E$ complete, we can state the main results in our paper as follows:
\bigskip
\begin{thm}\label{t1} Suppose $T>2\,\textrm{Diam}(M)$ and that $(M,g)$ is a simple Riemannian manifold. Let $\A_1,\A_2$ denote one-forms and $q_1, q_2$ denote scalar functions satisfying \eqref{regularity} and such that
\bel{support}\supp{(\A_1-\A_2)} \subset \mathcal E\quad\text{and}\quad \supp{(q_1-q_2)} \subset \mathcal E.\ee 
Then the condition
\bel{t1b}\Lambda_{\A_1,q_1}=\Lambda_{\A_2,q_2}\ee
implies that there exists $\psi\in \mathcal C^1(\M)$ with $\psi|_{\pd \M}=0$ such that 
\bel{t1c}\A_1=\A_2+\,\bar{d}\psi \quad \forall \,(t,x)\in \mathcal M,\ee
where $\bar d$ denotes the exterior derivative on $\M$.
\end{thm}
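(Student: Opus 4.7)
The plan is to follow the classical geometric optics strategy, adapted to handle the weak regularity of the coefficients. I would first derive a boundary integral identity: given $f$, let $u_1$ solve \eqref{eq1} with coefficients $(\A_1,q_1)$ and let $u_2$ solve the corresponding equation with $(\A_2,q_2)$; their difference $w=u_1-u_2$ vanishes initially and satisfies $L_{\A_1,q_1}w = -(\A_1-\A_2)\nabla^{\g} u_2 - (q_1-q_2)u_2$ with zero boundary data. Pairing $w$ against a solution $v$ of the adjoint equation $L^*_{\A_2,q_2}v=0$ and using $\Lambda_{\A_1,q_1}=\Lambda_{\A_2,q_2}$, together with suitable integrations by parts (which require care because of the $W^{1,1}L^2$ regularity of $\A$), would produce the orthogonality relation
\[
\int_{\M}\bigl[(\A_1-\A_2)(\nabla^{\g}u_2)\,\bar v + (q_1-q_2)\,u_2\bar v\bigr]\,dV_{\g}=0
\]
valid for any $u_2,v$ in appropriate solution classes.

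Next I would construct geometric optics solutions concentrated along null geodesics of $(\M,\g)$. Since $\g=-dt^2+g$, any null geodesic has the form $s\mapsto(s,\gamma(s))$ with $\gamma$ a unit-speed geodesic in $(M,g)$, so the relevant phase is $\phi(t,x)=t-\rho(x)$ where $\rho$ solves the eikonal equation $|\nabla^g\rho|_g=1$ built from a point of $\pd M$ using simplicity. I would seek $u_2=e^{i\sigma\phi}(a+\sigma^{-1}r_\sigma)$ and $v=e^{i\sigma\phi}(\tilde a+\sigma^{-1}\tilde r_\sigma)$ where the principal amplitudes $a,\tilde a$ satisfy the transport equation $2\,\bar d\phi\cdot\bar\nabla a +(\Delta_{\g}\phi)a = 0$ (and the conjugate one for $\tilde a$), and where the remainders $r_\sigma,\tilde r_\sigma$ are controlled in $\mathscr{X}$ by a resolvent estimate uniformly in $\sigma$. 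The support condition \eqref{support} together with $T>2\,\mathrm{Diam}(M)$ guarantees that $\mathcal E$ contains the interior segment of each null geodesic, so the contribution of boundary layers does not enter the integral over the coefficient supports.

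Substituting the GO solutions into the orthogonality identity and letting $\sigma\to\infty$, the dominant contribution is
\[
\int_{\M} (\A_1-\A_2)(\bar d\phi)\,a\tilde a\,dV_{\g}=0,
\]
while the $q$-term and the error terms drop out by an order of $\sigma^{-1}$, under the assumption $p_2\ge n$ and $p_1>1$ that controls the relevant $L^p$ pairings via Sobolev embedding. Choosing $a$, $\tilde a$ to localize on a tube around the chosen geodesic and varying the geodesic, one recovers the light ray transform of the one-form $\A_1-\A_2$ on all null geodesics entering and exiting $\M$ transversally. This transform is well-defined on $\A_1-\A_2$ despite low regularity because it extends as a Fourier Integral Operator to appropriate distributional one-forms, which is the perspective the authors announce.

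The final step is the inversion of the light ray transform on simple manifolds. Writing $\A_j=b_j\,dt+A_j$, the vanishing of the light ray transform of $\A_1-\A_2$ along every null line $(s,\gamma(s))$ reduces, after integrating over $s$, to the vanishing of the geodesic $X$-ray transform of $b_1-b_2$ and of the one-form $A_1-A_2$ on $(M,g)$, with time as a parameter. On a simple manifold, the solenoidal injectivity of the $X$-ray transform on one-forms (combined with injectivity on functions) yields $b_1-b_2=\pd_t\psi$ and $A_1-A_2=d\psi$ in $M$ for each $t$, where $\psi$ is the potential produced by the transform inversion, and the support condition \eqref{support} forces $\psi|_{\pd\M}=0$ and the claimed $\mathcal C^1$ regularity. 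Combining gives $\A_1-\A_2=\bar d\psi$, which is \eqref{t1c}.

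The main obstacle is executing these classical steps under the low regularity in \eqref{regularity}: one must justify the integral identity and the convergence of the GO remainders in the absence of smoothness of $\A$, and, crucially, make sense of the light ray transform and its inversion for one-forms that are merely $W^{1,1}L^2$ with divergence in $L^{p_1}L^{p_2}$. This is where the Fourier Integral Operator extension of the light ray transform, and the associated mapping properties on anisotropic $L^p$-type spaces, carry the whole argument; all the rest is bookkeeping that parallels the smooth case.
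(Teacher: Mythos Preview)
Your outline has the right overall architecture, but there are two genuine gaps where the argument as written would fail.

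\textbf{The GO remainder.} You take the transport equation $2\,\bar d\phi\cdot\bar\nabla a+(\Delta_{\g}\phi)a=0$ \emph{without} the first--order term $\A$, presumably to avoid the low regularity. But then, for $u_2$ solving $L_{\A_2,q_2}u_2=0$, the residual
\[
L_{\A_2,q_2}(e^{i\sigma\phi}a)=e^{i\sigma\phi}\bigl(i\sigma\,(\A_2\nabla^{\g}\phi)\,a+L_{\A_2,q_2}a\bigr)
\]
contains a term of size $O(\sigma)$. The energy estimate then gives only $\|R_\sigma\|_{H^1}=O(\sigma)$ and, even with the sharper integrated estimate, $\|R_\sigma\|_{\CI L^2}=O(1)$. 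In the integral identity the cross terms such as $\sigma^{-1}\!\int(\A_1-\A_2)\nabla^{\g}(e^{i\sigma\phi}a)\,\overline{R'_\sigma}$ are then $O(1)$, not $o(1)$, and the passage to the light ray transform collapses. The paper resolves this by inserting the \emph{mollified} one--forms $\A_{k,\rho}$ into the transport equation: the amplitudes $c_{k,\rho}$ are smooth with $\|c_{k,\rho}\|_{W^{k,\infty}}\lesssim\rho^{k/4}$, the residual source contains $\rho(\A_k-\A_{k,\rho})$ which is $o(\rho)$, and a further integration--by--parts in $t$ (using $\A\in W^{1,1}(0,T;L^2)$) yields $\|R_{k,\rho}\|_{\CI L^2}=o(1)$ and $\rho^{-1}\|R_{k,\rho}\|_{H^1}=o(1)$. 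This mollification--and--balance step is the essential new idea for non--smooth $\A$ and is absent from your plan. A side effect is that $c_{1,\rho}c_{2,\rho}$ carries the factor $\exp\bigl(\tfrac12\int(\A_\rho)_0\bigr)$, so the limit first yields $\mathcal L_\beta\A\in 4\pi i\mathbb Z$; the conclusion $\mathcal L_\beta\A=0$ then requires a continuity/deformation argument in the null--geodesic parameter.

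\textbf{The inversion.} Your reduction of the light ray transform to the geodesic $X$--ray transform ``with time as a parameter, after integrating over $s$'' is not correct: integrating $\mathcal L f(s,y,v)$ over $s$ only recovers $\mathcal I\bigl(\int_\R f(t,\cdot)\,dt\bigr)$, i.e.\ information about the time average, not about each time slice. The paper instead takes the partial Fourier transform in $s$, obtaining $\widehat{\mathcal L f}(\tau,y,v)=\int e^{i\tau r}\widehat f(\tau,\gamma(r;y,v))\,dr$, and uses analyticity of $\tau\mapsto\widehat f(\tau,\cdot)$ (compact support in $t$) together with injectivity of $\mathcal I$ on $L^2(M)$ to conclude. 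For one--forms the corresponding argument is the one in \cite{FIKO}, which only requires $\A\in\CI(\M;T^*\M)$, and produces $\psi\in\CI^1$ with $\psi|_{\pd\M}=0$; the $X$--ray solenoidal injectivity on $(M,g)$ enters there, but not in the ``fixed $t$'' way you describe.
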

\bigskip
\begin{thm}\label{t2} Let the hypothesis of Theorem~\ref{t1} be fulfilled and assume additionally that 
\bel{additional}
q_1-q_2\in L^{p_1}(0,T;L^{\infty}(M)),\quad \div_{\g}(\A_1-\A_2) \in L^{p_1}(0,T;L^{\infty}(M))
\ee
 holds. Then the condition $\Lambda_{\A_1,q_1}=\Lambda_{\A_2,q_2}$ implies that there exists $\psi \in \CI_0^1(\M)$ with $\Delta_{\g}\psi \in L^{p_1}(0,T;L^{\infty}(M))$ such that
\bel{t2c}
\A_1=\A_2+\,\bar{d}\psi, \quad q_1=q_2+\frac{1}{2}\Delta_{\g}\psi-\frac{1}{2}\A_2\nabla^{\g}\psi-\frac{1}{4}\langle\nabla^{\g}\psi,\nabla^{\g}\psi\rangle_{\g}\quad \forall\,(t,x) \in \M.
\ee
\end{thm}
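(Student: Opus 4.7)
The plan is to use the gauge freedom provided by Theorem~\ref{t1} to convert the problem into the recovery of a scalar potential when the one-form is fixed, and then to invoke the geometric optics construction and light-ray transform inversion already developed for the proof of Theorem~\ref{t1}.

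First, Theorem~\ref{t1} yields $\psi\in\mathcal C^1(\M)$ with $\psi|_{\pd\M}=0$ and $\A_1=\A_2+\bar d\psi$. Since $\Delta_{\g}\psi=\div_{\g}(\A_1-\A_2)$, the additional hypothesis \eqref{additional} gives $\Delta_{\g}\psi\in L^{p_1}(0,T;L^\infty(M))$, which is the regularity of $\psi$ asserted in the theorem. Setting
\[\tilde q_2:=q_2+\tfrac12\Delta_{\g}\psi-\tfrac12\A_2\nabla^{\g}\psi-\tfrac14\langle\nabla^{\g}\psi,\nabla^{\g}\psi\rangle_{\g},\]
I would then perform the conjugation $u\mapsto v=e^{\psi/2}u$. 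A direct computation based on
\[\Delta_{\g}(e^{-\psi/2}v)=e^{-\psi/2}\Bigl[\Delta_{\g}v-\langle\nabla^{\g}\psi,\nabla^{\g}v\rangle_{\g}+\bigl(-\tfrac12\Delta_{\g}\psi+\tfrac14\langle\nabla^{\g}\psi,\nabla^{\g}\psi\rangle_{\g}\bigr)v\Bigr]\]
together with $\A_2\nabla^{\g}(e^{-\psi/2}v)=e^{-\psi/2}[\A_2\nabla^{\g}v-\tfrac12(\A_2\nabla^{\g}\psi)v]$ yields the intertwining identity $L_{\A_2,q_2}(e^{-\psi/2}v)=e^{-\psi/2}L_{\A_1,\tilde q_2}v$. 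Because $\psi=0$ on $\pd\M$, this substitution preserves Dirichlet and initial data, and on the boundary one checks using $\pd_{\bar\nu}\psi=(\bar d\psi)\bar\nu$ that $\pd_{\bar\nu}u-\tfrac12(\A_2\bar\nu)u=\pd_{\bar\nu}v-\tfrac12(\A_1\bar\nu)v$. Consequently $\Lambda_{\A_2,q_2}=\Lambda_{\A_1,\tilde q_2}$, and combined with \eqref{t1b} this reduces the theorem to the identity $\Lambda_{\A_1,q_1}=\Lambda_{\A_1,\tilde q_2}\Longrightarrow q_1=\tilde q_2$.

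The two operators $L_{\A_1,q_1}$ and $L_{\A_1,\tilde q_2}$ now carry the same one-form, and the difference $q_1-\tilde q_2$ lies in $L^{p_1}(0,T;L^\infty(M))$ and is still supported in $\mathcal E$. To finish I would reuse the geometric optics construction from the proof of Theorem~\ref{t1}, building families of solutions $u_1$ of $L_{\A_1,q_1}u_1=0$ and $u_2$ of $L^*_{\A_1,\tilde q_2}u_2=0$ concentrating on a common null geodesic of $(\M,\g)$. The standard boundary integration by parts triggered by the equality of DN maps produces
\[\int_{\M}(q_1-\tilde q_2)\,u_1\overline{u_2}\,dV_{\g}=0,\]
and because both amplitudes are built from the same magnetic one-form $\A_1$, all leading-order contributions involving $\A_1$ cancel, so the high-frequency limit concentrates on the light-ray transform of $q_1-\tilde q_2$ along every null geodesic of $(\M,\g)$ meeting $\mathcal E$. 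Inverting this transform by the Fourier integral operator machinery already developed for non-smooth integrands earlier in the paper yields $q_1=\tilde q_2$ on $\mathcal E$, and hence on all of $\M$ by the support condition \eqref{support}, which is precisely \eqref{t2c}.

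The principal technical obstacle is the one shared with Theorem~\ref{t1}: justifying the geometric optics construction and the passage to the light-ray transform under the weak regularity assumptions \eqref{regularity} and \eqref{additional}, and controlling remainders that involve the non-smooth one-form $\A_1$ and the $L^{p_1}L^{p_2}$ potentials. The purpose of the gauge reduction in the first step is exactly to align the two one-forms so that this machinery can be applied cleanly to isolate the scalar difference $q_1-\tilde q_2$ rather than a coupled magnetic-scalar contribution.
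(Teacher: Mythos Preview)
Your proposal is correct and follows essentially the same route as the paper: apply Theorem~\ref{t1} to obtain $\psi$, use the gauge conjugation $u\mapsto e^{\psi/2}u$ (which is exactly the content of Lemma~\ref{DNgauge}) to reduce to $\Lambda_{\A_1,q_1}=\Lambda_{\A_1,\tilde q_2}$, then run the geometric optics construction with the common one-form $\A_1$ so that the Alessandrini identity isolates $\int_\M (q_1-\tilde q_2)u_1u_2\,dV_{\g}$, and finally invoke Proposition~\ref{lightray}(i). The paper handles the remainder cross term $\int qR_{1,\rho}R_{2,\rho}$ precisely via the $L^{p_1}(0,T;L^\infty(M))$ regularity of $q_1-\tilde q_2$ that you noted, which is indeed the reason hypothesis~\eqref{additional} is needed.
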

\bigskip
The proof of Theorems~\ref{t1}-\ref{t2} rely in part on the inversion of the light ray transform of one-forms and scalar functions over $\M$ under the hypothesis  \eqref{support} and the regularity conditions \eqref{regularity}. This has already been accomplished for $\CI^1$ one-forms and continuous scalar functions in \cite{FIKO}, but some additional analysis is required here as we are working with a wider regularity class for the coefficients $\A$ and $q$. Let us briefly recall the notion of the light ray transform here. We denote by $SM \subset TM$ the unit sphere bundle of $(M,g)$, and by $\gamma(\cdot; x, v)$ the geodesic with the initial data $(x,v) \in SM$. For all $(x,v) \in SM^\inter$, we define the exit times
$$
\tau_{\pm}(x,v) = \inf \{ r > 0 : \gamma(\pm r; x,v) \in \pd M\}
$$
and note that since $(M,g)$ is simple, we have $\tau_{\pm}(x,v) < \textrm{Diam}(M)$. Define
$$
\pd_{\pm} SM = \{ (x,v) \in SM \,|\, x \in \pd M\quad \pm \pair{v, \nu(y)}_g > 0\}.
$$
All geodesics in $M^\inter$ can be parametrized by $\gamma(\cdot; x,v)$, $(x,v) \in \pd_- SM$.
The geodesic ray transform on $(M,g)$ is defined
for $f \in \CI^\infty(M)$ by
$$
\mathcal I f(x,v) = \int_0^{\tau_+(x,v)} f(\gamma(r;x,v)) dr, \quad (x,v) \in \pd_- SM.
$$
Next, we consider the Lorentzian manifold $\R\times M$ with metric $\g=-(dt)^2+g$. Recall that a curve $\beta$ in $\R \times M$ is called a null geodesic (also called light rays) if 
\bel{null geo}
\nabla^{\g}_{\dot{\beta}}\dot{\beta}=0 \quad \text{and}\quad \langle \dot{\beta},\dot{\beta}\rangle_{\g}=0.
\ee
One can use the product structure of the Lorentzian manifold $\R\times M$ to see that the null geodesics $\beta$ can be parametrized as 
$$\beta(r;s,x,v)=(r+s,\gamma(r;x,v)) \quad \forall (s,x,v) \in \R\times \pd_-SM.$$
Thus, we can identify null geodesics $\beta$ through $\beta(\cdot;s,x,v)$ with $(s,x,v) \in \R \times \pd_-SM$ over their maximal intervals $[0,\tau_+(x,v)]$. We define the light ray transform on $\R \times M$ that is defined for $f \in \CI^\infty(\R \times M)$ by 
$$
\mathcal L f(s,x,v) = \int_0^{\tau_+(x,v)} f(r+s,\gamma(r;x,v)) \, dr, \quad \forall (s,x,v) \in \R \times \pd_- SM. 
$$
Similarly, we define the light ray transform corresponding to smooth one-forms $\mathcal B$, through the expression
$$
\mathcal L \mathcal B\,(s,y,v) := \mathcal L \mathcal (\mathcal B\dot{\beta})\, (s,y,v).
$$
We will sometimes use the short hand notation $\mathcal L_{\beta} f,\mathcal L_{\beta}\mathcal B$ in place of the above notation. In Section~\ref{FIO}, we will show that $\mathcal L_{\beta} f$ is a Fourier Integral Operator and that the domain of definition can be extended to $L^{p}$ spaces. We will prove the following Proposition in Section~\ref{lightray section}, that is a key step in proving Theorems~\ref{t1}-\ref{t2}.
\begin{prop}
\label{lightray}
Let $f \in L^1(0,T;L^2(M))$ and $\mathcal B \in \CI(\M;T^*\M)$ both vanish on the set $\M \setminus \mathcal E$. Then the following statements hold:
\begin{itemize}
\item[(i)] {If $\mathcal L_{\beta}\,f=0$ for all maximal null geodesics $\beta \subset \mathcal D$, then $f\equiv 0$.}
\item[(ii)]{If $\mathcal L_{\beta}\,\mathcal B=0$ for all maximal null geodesics $\beta \subset \mathcal D$, then $\mathcal B\equiv \bar{d}\psi$ for some $\psi \in \CI^1(\M)$ with $\psi|_{\pd \M}=0$.}
\end{itemize}
\end{prop}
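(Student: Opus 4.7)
My strategy is to reduce both parts of the proposition, via Fourier transform in the time-shift parameter $s$, to the inversion of a one-parameter family of $\sigma$-attenuated geodesic X-ray transforms on the simple Riemannian manifold $(M,g)$. A preliminary observation simplifies the hypothesis: since $D_g(\gamma(r;x,v)) \geq \tau_+(x,v)$ for every $r$, the condition $(r+s, \gamma(r;x,v)) \in \mathcal E$ forces $s \in (0, T - \tau_+(x,v))$, which is precisely the range where $\beta(\cdot;s,x,v) \subset \mathcal D$. Combined with the support assumption, the hypothesis $\mathcal L_\beta f = 0$ (resp.\ $\mathcal L_\beta \mathcal B = 0$) for $\beta \subset \mathcal D$ therefore upgrades to $\mathcal L_\beta f \equiv 0$ (resp.\ $\mathcal L_\beta \mathcal B \equiv 0$) for \emph{every} null geodesic $\beta$, since such a geodesic that misses $\supp f$ (resp.\ $\supp \mathcal B$) trivially yields zero.

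For part (i), extending $f$ by zero to $\R \times M$ and taking the Fourier transform in $s$, the change of variables $t = r+s$ converts the hypothesis into
\begin{equation*}
\int_0^{\tau_+(x,v)} e^{ir\sigma}\, \hat f(\sigma, \gamma(r;x,v))\, dr = 0 \qquad \forall\, (\sigma, x, v) \in \R \times \pd_- SM.
\end{equation*}
For each fixed $\sigma$, this is the $\sigma$-attenuated geodesic X-ray transform of $\hat f(\sigma, \cdot) \in L^2(M)$. Its injectivity on simple manifolds with constant imaginary attenuation (as in \cite{FIKO} and earlier works) yields $\hat f(\sigma, \cdot) \equiv 0$ for almost every $\sigma$, whence $f \equiv 0$.

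For part (ii), the same reduction applied to $\mathcal B = b\,dt + A$ gives
\begin{equation*}
\int_0^{\tau_+(x,v)} e^{ir\sigma} \bigl[\hat b(\sigma, \gamma(r;x,v)) + \hat A(\sigma, \gamma(r;x,v))\, \dot\gamma(r;x,v)\bigr]\, dr = 0.
\end{equation*}
A direct calculation shows that this integrand is the $r$-derivative of $e^{ir\sigma} \hat\phi_\sigma(\gamma(r;x,v))$ precisely when $\hat b(\sigma, \cdot) = i\sigma\, \hat\phi_\sigma$ and $\hat A(\sigma, \cdot) = d_x\, \hat\phi_\sigma$, which mirrors in Fourier the gauge relation $\mathcal B = \bar d \psi$ with $\psi|_{\pd \M} = 0$. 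The kernel characterisation of the attenuated one-form X-ray transform on simple manifolds then yields such $\hat\phi_\sigma$ for each $\sigma$; inverting the Fourier transform in $\sigma$ produces the desired $\psi \in \CI^1(\M)$ with $\psi|_{\pd \M} = 0$ and $\mathcal B = \bar d \psi$.

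The principal technical obstacle is the low regularity of $f$ in part (i): the assertion $\mathcal L_\beta f \equiv 0$ must be interpreted correctly for $f \in L^1(0,T; L^2(M))$, and each step of the Fourier-reduction above requires a rigorous extension of $\mathcal L$ beyond the smooth category. This is precisely what the Fourier integral operator framework of Section~\ref{FIO} provides: a continuous extension of $\mathcal L$ to the relevant $L^p$-spaces that validates the frequency-by-frequency inversion and thereby reduces the non-smooth problem to the classical smooth inversion on the simple base manifold.
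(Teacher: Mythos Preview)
Your reduction via the Fourier transform in the shift parameter $s$ is the same starting point as the paper's, and your preliminary observation that the support hypothesis upgrades the vanishing of $\mathcal L_\beta$ from $\beta \subset \mathcal D$ to all null geodesics is correct.

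The genuine gap is the step where, for each fixed $\sigma \in \R$, you invoke injectivity of the $\sigma$-attenuated geodesic X-ray transform
\[
h \longmapsto \int_0^{\tau_+(x,v)} e^{ir\sigma}\, h(\gamma(r;x,v))\, dr
\]
on a simple manifold, citing \cite{FIKO}. That reference does not establish this; both \cite{FIKO} and the present paper deliberately avoid the attenuated transform. For general simple manifolds in dimension $n \geq 3$, injectivity of the geodesic X-ray transform with an arbitrary (even constant, purely imaginary) attenuation is not a standard result: the Pestov-type energy identities that yield injectivity of $\mathcal I$ do not survive insertion of the oscillatory weight $e^{ir\sigma}$ without a smallness assumption on $\sigma$. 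The same objection applies to the kernel characterisation you invoke for one-forms in part (ii), and there you have the further unaddressed issue of assembling the frequency-wise potentials $\hat\phi_\sigma$ into a single $\psi \in \CI^1(\M)$ with the required boundary behaviour.

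The paper sidesteps this entirely by differentiating the Fourier-slice identity $k$ times at $\sigma = 0$:
\[
\pd_\sigma^k \widehat{\mathcal L f}(0, x, v) = \mathcal I\bigl(\pd_\sigma^k \hat f(0, \cdot)\bigr)(x,v) + \sum_{j=0}^{k-1} \binom{k}{j} \mathcal R_{k-j}\bigl(\pd_\sigma^j \hat f(0,\cdot)\bigr)(x,v),
\]
where the remainders $\mathcal R_{k-j}$ involve only lower-order derivatives already controlled. Injectivity of the \emph{unattenuated} transform $\mathcal I$ --- which \emph{is} classical on simple manifolds --- then gives $\pd_\sigma^k \hat f(0, \cdot) = 0$ for all $k$ by induction, and analyticity of $\sigma \mapsto \hat f(\sigma, \cdot)$ (from compact support in $t$) finishes. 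The low regularity of $f$ is handled by passing this identity from $C_c^\infty$ to $L^1(0,T;L^2(M))$ via the FIO continuity of $\mathcal L$, the $L^2$-boundedness of $\mathcal I$ and of each $\mathcal R_j$, and the continuity of $f \mapsto \pd_\sigma^k \hat f(0)$ on compactly supported distributions. Part (ii) is referred to \cite{FIKO}, where the same Taylor-at-zero mechanism is run for continuous one-forms.
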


The proof of statement (ii) will be identical to that of statement (ii) in \cite[Proposition 1.4]{FIKO}, with the only difference being that $\mathcal B \in \CI(\M;T^*\M)$ here as opposed to $\CI^1(\M;T^*\M)$. Reproducing the exact same analysis as in the proof there shows that one obtains existence of a $\psi \in \CI^1(\M)$ with $\psi |_{\pd \M}=0$ such that (ii) holds and therefore for the sake of brevity we omit this proof. We will however prove statement (i) in Section~\ref{lightray section}. 

\subsection{Previous literature}

Historically, uniqueness results for the recovery of coefficients can be divided into two categories, based on whether or not the geometry and coefficients are dependent on time. The time-independent case has been studied extensively and one can outline at least three general methods for the recovery of the coefficients in this case. The first approach, stemming from the seminal works \cite{Bel87,Bel92}, relies on the so-called Boundary Control (BC) method together with Tataru's sharp unique continuation theorem \cite{Tataru}. This method yields recovery of time-independent coefficients under very weak assumptions on the transversal manifold $(M,g)$. We refer the reader to \cite{KOM} for an introduction to the BC method and to the recent paper \cite{KOP} for an example of a state of the art result and finally to \cite{Bel,KKL} for review. The stability results are in general double logarithmic (\cite{BKL}) although in \cite{LOk} a stronger low-frequency stability estimate was obtained by using ideas from the BC method. Tataru's unique continuation theorem fails when the time-dependence of the metric or the coefficients is not real analytic \cite{Al,AB}, and therefore adaptations based on the BC method fails beyond this scenario. We refer the reader to \cite{E2} for recovery of coefficients when the time-dependence is real analytic. An alternative approach in deriving uniqueness results in the time-independent category started from the seminal work \cite{BK}, where Carleman estimates were used for the first time in the context of inverse problems. Proofs based on Carleman estimates tend to yield stronger stability estimates compared to BC method. Methods based on using the classical geometric optic solutions to the wave equation have also been quite fruitful in deriving uniqueness results in time-independent category (see for example \cite{BD,BJY,SU1,SU2}). 

In the time-dependent category, apart from \cite{E2} mentioned above, most of the results are concerned with wave equations with constant coefficient principal part. In \cite{St}, the author used geometric optic solutions for the wave equation with constant principal terms and an unknown zeroth order term to prove uniqueness by showing that the boundary data determines the light ray transform of the unknown scalar function in Minkowski space and subsequently inverting this transform. We refer the reader to \cite{A,I,Ki2,Ki4,RS,S} for similar results in this category.

Literature dealing with uniqueness results for the case of a wave equation with time-dependent first and zeroth order coefficients on a Riemannian manifold, where the time-dependence is non-analytic, is sparse. We refer to \cite{KiOk,W} for the study of recovering a time-dependent zeroth order term appearing in the wave equation. In the recent paper \cite{SY}, the authors used Fourier Integral Operators to show that a micro-local formulation of the Dirichlet to Neumann map $\Lambda_{\A,q}$ uniquely determines the light ray transforms of the one-form $\A$ and scalar function $q$. There, it was assumed that the coefficients are in some $\CI^k$ space with $k$ large enough. It was recently proved in \cite{FIKO} that if the one-form is $\CI^1$ smooth and the scalar function is continuous, then one can use the classical Gaussian beam construction to uniquely obtain the light ray transforms of $\A$ and $q$ from the knowledge of $\Lambda_{\A,q}$. The inversion of the light ray transform was also proved for the first time under the assumption that the geodesic ray transform is injective on the transversal manifold $(M,g)$ and that the coefficients are known for some explicit lengths of time near $t=0$ and $t=T$. In this paper, we generalize the result obtained in \cite{FIKO} to the case of non-smooth coefficients. We prove that if $(M,g)$ is simple, the Dirichlet to Neumann map uniquely determines the light ray transform of the non-smooth coefficients and subsequently show the inversion of the light ray transform as a Fourier Integral Operator under the additional assumption that the coefficients are known on a slightly larger set compared to the sharp domain $\mathcal D$ where no information can be obtained about the coefficients. This generalization and the difficulties therein are discussed in more detail in the subsequent section.

\subsection{Comments about our results}

We discuss some of the main novelties of our result, both by previewing some of the technical challenges and also by motivating the study of non-smooth coefficients in their own right. The technical difficulties are three-fold. One difficulty stems from the study of the forward problem and the need for sharper energy estimates for the determination of the correction terms appearing in the formal geometric optic ansatz. Another key difficulty stems from the one-form $\A$, as any lack of smoothness in the one-form appears at the level of the principal term corresponding to the geometric optic ansatz, thus making the task of a meaningful geometric optic solution to the wave equation and the reduction to the light ray transform of the coefficients more challenging. Finally, let us remark that in \cite{FIKO} the inversion of the light ray transform was proved for smooth coefficients. We generalize this inversion method to non-smooth functions by extending the notion of the light ray transform and the inversion method, in a distributional sense, to non-smooth coefficients.  

Aside from the technical challenges, it should be remarked that the recovery of non-smooth coefficients is a well-motivated question in its own right as it can be associated with the determination of various unstable phenomenon which can not be modeled by smooth parameters. For elliptic equations, this topic has received a lot of attention these last few decades (see \cite{AP,CR,H,HT,KU}). However, only few authors have addressed this issue for hyperbolic equations. Concerning the recovery of time dependent coefficients, \cite{HK} seems to be the only paper addressing this issue. The result of \cite{HK} concerns the recovery of a zeroth order coefficient on a flat Lorentzian manifold with the Minkowski metric. In Theorem \ref{t1} and \ref{t2}, we prove, for what seems to be the first time, the extension of this work to the recovery of non-smooth first and zeroth order coefficients appearing in a hyperbolic equation associated with a more general Lorentzian manifold.

Let us observe also that our inverse problem is intricately connected with the recovery of nonlinear terms appearing in hyperbolic equations. Indeed, following the strategy set by \cite{CK,CK2,I2,I3} for parabolic equations, through a linearization procedure initially introduced by \cite{I2}, one can reduce the problem of determining coefficients appearing in a non-linear problem to the recovery of time-dependent coefficients appearing in a linear equation.  In \cite{Ki5} the author proved the extension of this approach to semi-linear hyperbolic equations. Note that in this procedure, the time-dependent coefficient under consideration depends explicitly on solutions of the nonlinear equation. Therefore, following  the analysis of \cite{Ki5}, the recovery of non-smooth coefficients can be seen as an important step in the more difficult problem of determining quasi-linear terms appearing in nonlinear hyperbolic equations.

\subsection{Outline of the paper}
This paper is organized as follows. In Section~\ref{prelim}, we start by considering the direct problem \eqref{eq1} and rigorously justify the definition \eqref{dnmapspace} also deriving a key boundary integral identity (see Lemma~\ref{allesandrini}). Moreover, we discuss smooth approximations of the coefficients $\A, q$ and also extend the notion of the light ray transform to $L^{p}$ functions. Section~\ref{GOsection} is concerned with the construction of geometric optic solutions to \eqref{eq1} concentrating on maximal null geodesics in the set $\mathcal D$. In Section~\ref{uniqueness} we prove Theorems~\ref{t1}-\ref{t2} by applying the geometric optic construction and Proposition~\ref{lightray}. Finally, Section~\ref{lightray section} is concerned with the proof of statement (i) in Proposition~\ref{lightray}. As explained in Section~\ref{main} statement (ii) follows analogously to statement (ii) in \cite[Proposition 1.4]{FIKO}.
\section{Preliminaries}
\label{prelim}
\subsection{Direct problem}
\label{Forwardproblem}
In this section we study the wave equation \eqref{eq1} and show that for $\A,q$ satisfying \eqref{regularity} and each $f \in H^1_0((0,T]\times \pd M)$ it admits a unique solution $u$ in energy space \eqref{energyspace}. We will repeatedly use the Sobolev embedding theorem as follows. 
\bel{soem}
\|f_1f_2\|_{L^{p_1}(0,T;L^2(M))}\lesssim \|f_1\|_{L^{p_1}(0,T;L^{p_2}(M))}\|f_2\|_{\CI(0,T;H^1(M))}.
\ee
This estimate holds since $H^1(M)\subset L^{\frac{2n}{n-2}}(M)$ for $n>2$ and $H^1(M) \subset L^p(M)$ for $n=2$ and any $p\in[1,\infty)$. In order to study the IBVP given by \eqref{eq1}, we start by considering the following IBVP
\bel{eq2}\left\{ \begin{array}{rcll} -\Delta_{\g} v+\A\nabla^{\g}v+ qv =  F,&  (t,x) \in \M ,\\
v(0,x)=v_0(x),\ \ \partial_t v(0,x)=v_1(x),& x\in M\\
v(t,x)=0,  & (t,x)\in (0,T)\times \pd M.& \end{array}\right.
\ee
We have the following well-posedness result for this IBVP.
\begin{prop}\label{p1} Let $p_1\in(1,+\infty)$ and $p_2\in [n,+\infty)\setminus\{2\}$. For $q\in L^{p_1}(0,T;L^{p_2}(M))$, $\A\in L^\infty(\M; T^*\M)$ and $F\in L^{p_1}(0,T; L^2(M))$, problem \eqref{eq2} admits a unique solution $v$ in the space
 \bel{energyhom}
\mathscr{X}_0:=\mathcal C([0,T];H^1_0(M))\cap \mathcal C^1([0,T];L^2(M))
\ee 
satisfying $\partial_\nu v\in L^2((0,T)\times\partial M)$ and the estimate
\bel{p1a} \norm{\partial_\nu v}_{L^2((0,T)\times\partial M)}+\norm{v}_{\mathscr{X}_0}\leq C (\norm{v_0}_{H^1(M)}+\norm{v_1}_{L^2(M)}+\norm{F}_{L^{p_1}(0,T; L^2(M))}),\ee
with $C$ depending only on $p_1$, $p_2$, $n$, $T$, $M$ and any $N\geq \norm{q}_{L^{p_1}(0,T;L^{p_2}(M))}+\|\A\|_{L^\infty(\M)}$.
\end{prop}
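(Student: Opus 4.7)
The plan is to treat Proposition~\ref{p1} as a perturbation result built on top of the classical well-posedness for the principal part $-\Delta_{\g}$. For the free wave equation
$$-\Delta_{\g} w = G,\quad w(0,\cdot)=v_0,\ \partial_t w(0,\cdot)=v_1,\quad w|_{(0,T)\times \partial M}=0,$$
with data $v_0 \in H^1_0(M)$, $v_1 \in L^2(M)$ and source $G \in L^1(0,T;L^2(M))$, the standard theory (Galerkin approximation together with the multiplier method of Lasiecka--Lions--Triggiani) produces a unique solution $w \in \mathscr{X}_0$ satisfying the hidden boundary regularity $\partial_\nu w \in L^2((0,T)\times \partial M)$ together with
$$\|\partial_\nu w\|_{L^2((0,T)\times\partial M)}+\|w\|_{\mathscr{X}_0}\leq C\bigl(\|v_0\|_{H^1(M)}+\|v_1\|_{L^2(M)}+\|G\|_{L^1(0,T;L^2(M))}\bigr).$$
Denote by $\mathcal S$ the corresponding solution operator; this will be the building block for the perturbation argument.

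To incorporate the lower order terms I would set up the fixed point map
$$\Phi(v):=\mathcal S\bigl(v_0,v_1,F-\A\nabla^{\g}v-qv\bigr)$$
on $\mathscr{X}_0$. Exploiting $\A \in L^\infty(\M;T^*\M)$ together with the Sobolev estimate~\eqref{soem} and H\"older's inequality in time,
$$\|\A\nabla^{\g}v\|_{L^1(0,T;L^2(M))}+\|qv\|_{L^1(0,T;L^2(M))}\lesssim \bigl(\tau\,\|\A\|_{L^\infty}+\tau^{1-1/p_1}\|q\|_{L^{p_1}(0,T;L^{p_2}(M))}\bigr)\|v\|_{\mathscr{X}_0},$$
so on intervals $[0,\tau]$ of length depending only on $N$ and $p_1$ the map $\Phi$ is a strict contraction, which yields a unique local solution via Banach's fixed point theorem. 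Iterating over successive time slabs and using the translation invariance of the admissible length $\tau$ extends this to a global solution on $[0,T]$.

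Uniqueness and the global estimate \eqref{p1a} follow from a single energy identity. Multiplying \eqref{eq2} by $\partial_t v$, integrating by parts, and applying Cauchy--Schwarz and \eqref{soem}, one arrives at
$$E(t)\leq C\bigl(E(0)+\|F\|_{L^{p_1}(0,T;L^2(M))}^2\bigr)+C\int_0^t\bigl(\|\A\|_{L^\infty}+\|q(s)\|_{L^{p_2}(M)}\bigr)E(s)\,ds,$$
where $E(t):=\|v(t)\|_{H^1(M)}^2+\|\partial_t v(t)\|_{L^2(M)}^2$. Since the hypothesis $p_1>1$ guarantees $\|q(\cdot)\|_{L^{p_2}(M)}\in L^{p_1}(0,T)\subset L^1(0,T)$, the generalized Gr\"onwall inequality for integrable kernels closes the bound for $\|v\|_{\mathscr{X}_0}$. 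Feeding this back into the free wave equation with source $F-\A\nabla^{\g}v-qv\in L^1(0,T;L^2(M))$ and invoking the hidden regularity estimate for $\mathcal S$ delivers the claimed control on $\partial_\nu v$.

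The main technical obstacle is the low time regularity of $q$: the classical Gr\"onwall lemma is not directly applicable because $\|q(s)\|_{L^{p_2}(M)}$ need not be essentially bounded in $s$, and one must instead work with the generalized form for integrable kernels, which is precisely what the hypothesis $p_1>1$ permits. A secondary subtlety is that the first order term $\A\nabla^{\g}v$ sits at the same Sobolev scale as the principal part, so smallness in the fixed point iteration cannot be obtained from a gain in regularity and must instead be extracted from the length of the time interval.
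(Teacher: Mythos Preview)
Your argument is correct, but it follows a genuinely different route from the paper's own proof, and the comparison is worth recording. The paper does not use a contraction mapping: instead it first establishes the a~priori energy estimate for sufficiently regular solutions (in $W^{2,\infty}(0,T;H^1_0(M))$), then invokes the abstract Galerkin framework of Lions--Magenes \cite[Chapter~3, Theorems~8.1 and~8.3]{LM1} to produce the solution, exactly as was done in \cite[Proposition~2.1]{HK}. Moreover, the energy inequality there is organized differently: rather than a linear Gr\"onwall with the integrable kernel $s\mapsto\|q(s)\|_{L^{p_2}(M)}$, the paper derives
\[
E(t)\leq \|F\|_{L^{p_1}(0,T;L^2(M))}^2 + C\Bigl(\int_0^t E(s)^{\frac{p_1}{p_1-1}}\,ds\Bigr)^{\frac{p_1-1}{p_1}},
\]
which is the nonlinear form borrowed from \cite{HK}. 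Your fixed-point construction is more self-contained (it avoids the Lions--Magenes machinery and works directly with the Duhamel operator $\mathcal S$), and your linear Gr\"onwall with $L^1$ kernel is arguably the more transparent way to close the estimate; the paper's route has the advantage of aligning verbatim with \cite{HK}, so most of the work can be cited rather than repeated. For the boundary trace, the two proofs coincide: both feed $F-\A\nabla^{\g}v-qv\in L^1(0,T;L^2(M))$ back into the free wave equation and appeal to the hidden regularity estimate (the paper cites \cite[Lemma~2.39]{KKL}). One small point worth tightening in your write-up: the displayed inequality with $E(0)+\|F\|^2$ on the right does not follow in one line from multiplying by $\partial_t v$; you need either the substitution $Y=E^{1/2}$ to linearize the $\|F(s)\|_{L^2}E(s)^{1/2}$ term, or a sup-in-time absorption argument, before Gr\"onwall applies cleanly.
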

\begin{proof}
We will prove this result by following the approach developed in \cite[Proposition 2.1]{HK}. Our first goal is to  show that for any $v\in W^{2,\infty}(0,T;H^1_0(M))$ solving \eqref{eq2}, the a priori estimate \eqref{p1a} holds true. Then, applying \cite[Theorem 8.1, Chapter 3]{LM1}, \cite[Remark 8.2, Chapter 3]{LM1} and \cite[Theorem 8.3, Chapter 3]{LM1}, the proof will be completed. We introduce the energy $E(t)$ at time $t\in[0,T]$ given by
$$E(t):=\int_M\left( |\partial_t v(t,x)|^2+|\nabla^g v(t,x)|^2\right)dV_g(x).$$
Multiplying \eqref{eq2} by $\overline{\partial_tv}$, taking the real part  and integrating by parts we get
\bel{p1b} \begin{aligned}E(t)=&-2\re\int_0^t\int_M[\A(s,x)\nabla^{\g}v(s,x)+ q(s,x)v(s,x)]\overline{\partial_t v(s,x)}dV_g(x)ds\\
\ &+2\re\int_0^t\int_M F(s,x)\overline{\partial_t v(s,x)}dV_g(x)ds.\end{aligned}\ee
Repeating the arguments of \cite[Proposition 2.1]{HK} we get
\bel{p1c}\begin{aligned}&\abs{\int_0^t\int_M q(s,x)v(s,x)\overline{\partial_t v(s,x)}dV_g(x)ds}+\abs{\int_0^t\int_M F(s,x)\overline{\partial_t v(s,x)}dV_g(x)ds}\\
&\leq  \norm{F}_{L^{p_1}(0,T; L^2(M))}^2+ C\left(\int_0^tE(s)^{\frac{p_1}{p_1-1}}ds\right)^{\frac{p_1-1}{p_1}},\end{aligned}\ee
where $C$ depends only on $T$, $M$, $p_1$, $p_2$, $n$ and any $N\geq \norm{q}_{L^{p_1}(0,T;L^{p_2}(M))}$. 
In the same way, we obtain 
\bel{p1d}\begin{aligned}&\abs{\int_0^t\int_M[\A(s,x)\nabla^{\g}v(s,x)]\overline{\partial_t v(s,x)}dV_g(x)ds}\\
&\leq \|\A\|_{L^\infty(\M)}\int_0^tE(s)ds\\
&\leq \|\A\|_{L^\infty(\M)} t^{\frac{1}{p_1}}\left(\int_0^tE(s)^{\frac{p_1}{p_1-1}}ds\right)^{\frac{p_1-1}{p_1}}\\
&\leq \|\A\|_{L^\infty(\M)} T^{\frac{1}{p_1}}\left(\int_0^tE(s)^{\frac{p_1}{p_1-1}}ds\right)^{\frac{p_1-1}{p_1}}.\end{aligned}
\ee
Combining \eqref{p1c}-\eqref{p1d}  with \eqref{p1b}, we obtain 
$$ E(t)\leq  \norm{F}_{L^{p_1}(0,T; L^2(M))}^2+ C\left(\int_0^tE(s)^{\frac{p_1}{p_1-1}}ds\right)^{\frac{p_1-1}{p_1}},$$
where $C$ depends only on $T$, $M$, $p_1$, $p_2$, $n$ and any $N\geq \norm{q}_{L^{p_1}(0,T;L^{p_2}(M))}+\|\A\|_{L^\infty(\M)}$. 
Using this last estimate we can deduce that \eqref{eq2} admits a unique solution $v$ in the space
 \eqref{energyhom} satisfying 
\bel{p1e}\norm{v}_{\mathscr{X}_0}\leq C (\norm{v_0}_{H^1(M)}+\norm{v_1}_{L^2(M)}+\norm{F}_{L^{p_1}(0,T; L^2(M))})\ee
by applying arguments similar to the end of the proof of \cite[Proposition 2.1]{HK}. Therefore the proof of the proposition will be completed if we show that $\partial_\nu v\in L^2((0,T)\times\partial M)$ and that the estimate
\bel{p1f} \norm{\partial_\nu v}_{L^2((0,T)\times\partial M)}\leq C (\norm{v_0}_{H^1(M)}+\norm{v_1}_{L^2(M)}+\norm{F}_{L^{p_1}(0,T; L^2(M))})\ee
is fulfilled. For this purpose, notice that $v$ solves 
$$\left\{ \begin{array}{rcll} -\Delta_{\g} v(t,x)= F_v(t,x),&  (t,x) \in \M ,\\
v(0,x)=v_0(x),\ \ \partial_t v(0,x)=v_1(x),& x\in M\\
v(t,x)=0,  & (t,x)\in (0,T)\times \pd M,& \end{array}\right.$$
with $F_v=-\A\nabla^{\g}v-qv+F$. Applying the Sobolev embedding theorem we deduce that $F_v\in L^1(0,T;L^2(M))$. Then, applying \cite[Lemma 2.39]{KKL} we deduce that $\partial_\nu v\in L^2((0,T)\times\partial M)$ and
$$ \begin{aligned}\norm{\partial_\nu v}_{L^2((0,T)\times\partial M)}&\leq C(\norm{F_v}_{L^1(0,T;L^2(M))}+\norm{v_0}_{H^1(M)}+\norm{v_1}_{L^2(M)})\\
\ &\leq C(\norm{v}_{\mathcal C([0,T];H^1(M))}+\norm{v}_{\mathcal C^1([0,T];L^2(M))}+\norm{F}_{L^{p_1}(0,T; L^2(M))}).\end{aligned}$$
Combining this with \eqref{p1e} we deduce \eqref{p1f} and this completes the proof of the proposition.

\end{proof}

We can use Proposition~\ref{energyhom} to show that equation \eqref{eq1} admits a unique solution $u$ in energy space \eqref{energyspace}. Recall the following classical IBVP:
\bel{eq3}\left\{ \begin{array}{rcll} &-\Delta_{\g} w=  0,&  (t,x) \in \M ,\\
&w(0,x)=0,\ \ \partial_t w(0,x)=0,& x\in M\\
&w(t,x)=f,  & (t,x)\in (0,T)\times \pd M.& \end{array}\right.
\ee
According to \cite[Theorem 2.30]{KKL}  (see also \cite{LLT}), this equation admits a unique solution $w$ in the energy space \eqref{energyspace}. 
We now return to \eqref{eq1} and note that we have $u=w+v$, where $w$ solves \eqref{eq3} with boundary term $f$, and $v$ solves \eqref{eq2} with $F:=-\A\nabla^{\g}w-qw$. As $\A,q$ satisfy \eqref{regularity} and since $w$ is in the energy space \eqref{energyspace}, it is immediate that $F \in L^{p_1}(0,T;L^2(M))$. Thus, Proposition~\ref{p1} applies to show that $u$ is in the energy space \eqref{energyspace}, with $\partial_\nu u\in L^2((0,T)\times\partial M)$, and we have that 
$$ \norm{\partial_\nu u}_{L^2((0,T)\times\partial M)}+\|u\|_{\mathscr X} \leq C \|f\|_{H^1_0((0,T]\times \pd M)}.$$ 
Using this estimate we can define the DN map as the bounded operator from $H^1_0((0,T]\times \pd M)$ to $L^2((0,T)\times\partial M)$ defined by
$$\Lambda_{\A,q} f=\left(\pd_{\bar\nu} u - \frac{\A\bar{\nu}}{2}u\right)|_{(0,T)\times \pd M}$$
for $u$ the solution of \eqref{eq1}.

We have the following lemma that will be used in Section~\ref{GOsection}.
\begin{lem}
\label{integral estimate}
Let $F \in L^{p_1}(0,T;L^2(M))$ and suppose $u$ is the unique solution to \eqref{eq2} subject to $u_0=u_1=0$. Then the following estimate holds:
\[
\|u\|_{\CI(0,T;L^2(M))} \leq C  \norm{\int_0^t F(s) \,ds}_{L^{p_1}(0,T;L^2(M))}.
\]
\end{lem}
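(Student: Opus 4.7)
The plan is to use duality against a backward adjoint wave equation, combined with an integration by parts in time that transfers a $\partial_t$ from $F$ onto the adjoint test function, thereby converting $F$ into its antiderivative $G(t)=\int_0^t F(s)\,ds$. Fix $t_0 \in (0,T]$ and $\phi_0 \in L^2(M)$. First I would consider the backward adjoint IBVP on $(0,t_0)\times M$ given by $-\Delta_{\g}\psi - \A\nabla^{\g}\psi + (q-\div_{\g}\A)\psi = 0$ with $\psi(t_0,\cdot)=0$, $\pd_t\psi(t_0,\cdot)=-\phi_0$, and $\psi|_{(0,t_0)\times\pd M}=0$. Since $q-\div_{\g}\A \in L^{p_1}(0,T;L^{p_2}(M))$ by \eqref{regularity} and $\A\in L^\infty(\M;T^*\M)$ (as $\A \in \CI(\M;T^*\M)$ is continuous on the compact $\M$), the time reversal $t\mapsto t_0-t$ reduces this problem to one of the form \eqref{eq2} with zero source and initial data in $H^1_0(M)\times L^2(M)$, and Proposition~\ref{p1} yields a unique solution $\psi\in\mathscr{X}_0$ with $\|\pd_t\psi\|_{\CI([0,t_0];L^2(M))} \leq C\|\phi_0\|_{L^2(M)}$.

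Next I would pair the equation $L_{\A,q}u=F$ with $\bar\psi$ and integrate by parts on $(0,t_0)\times M$. All spatial boundary contributions vanish since $u$ and $\psi$ vanish on $\pd M$; the temporal contributions at $t=0$ vanish because $u(0)=\pd_t u(0)=0$; at $t=t_0$ the only surviving piece comes from $\pd_t\psi(t_0)=-\phi_0$, which contributes exactly $(u(t_0,\cdot),\phi_0)_{L^2(M)}$. The specific choice of the zeroth order coefficient $q-\div_{\g}\A$ in the adjoint equation is precisely what is required to make the $\A\nabla^{\g}$ contribution cancel after the integration by parts under the weak regularity of $\A$. The resulting identity reads $(u(t_0,\cdot),\phi_0)_{L^2(M)} = \int_0^{t_0}\!\int_M F\bar\psi\, dV_g\, dt$.

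Now I would set $G(t):=\int_0^t F(s)\,ds$, noting that $G\in\CI([0,T];L^2(M))$ with $G(0)=0$ and $\pd_t G=F$ in the distributional sense, and integrate by parts once more in $t$. The boundary contributions vanish because $G(0)=0$ and $\psi(t_0,\cdot)=0$, yielding $(u(t_0,\cdot),\phi_0)_{L^2(M)} = -\int_0^{t_0}\!\int_M G\,\overline{\pd_t\psi}\, dV_g\, dt$. H\"older's inequality in time with the conjugate exponents $p_1,p_1'$, combined with the adjoint energy bound on $\pd_t\psi$ and the embedding $\CI([0,t_0];L^2(M))\hookrightarrow L^{p_1'}(0,t_0;L^2(M))$, then gives $|(u(t_0,\cdot),\phi_0)_{L^2(M)}| \leq C\,T^{1/p_1'}\,\|G\|_{L^{p_1}(0,T;L^2(M))}\|\phi_0\|_{L^2(M)}$. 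Taking the supremum over $\phi_0$ in the unit ball of $L^2(M)$ and then over $t_0\in[0,T]$ completes the proof.

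The main technical point I anticipate is the rigorous justification of the two integrations by parts under the weak regularity hypotheses \eqref{regularity} of $\A$ and $q$, and in particular the cancellation that produces the $\div_{\g}\A$ coefficient in the adjoint operator. This is best handled by approximating $\A$ and $q$ by smooth coefficients (the smooth approximations discussed in Section~\ref{prelim} are tailored to this purpose), performing all manipulations at the smooth level, and passing to the limit using the stability estimate \eqref{p1a} together with the continuity of the antiderivative map $F\mapsto \int_0^t F(s)\,ds$ from $L^{p_1}(0,T;L^2(M))$ into $\CI([0,T];L^2(M))$.
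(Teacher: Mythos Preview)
Your duality argument is correct and genuinely different from the paper's proof. The paper instead sets $v(t,x)=\int_0^t u(s,x)\,ds$, observes that $v$ solves $-\Delta_{\g}v=H$ with $H(t,x)=-\int_0^t\A\nabla^{\g}u\,ds-\int_0^t qu\,ds+\int_0^t F\,ds$, and then runs a direct energy estimate: multiplying by $\overline{\partial_t v}=\bar u$, integrating, and bounding the three contributions to $H$ separately (the $\A$-term requires an additional integration by parts in space and Fubini in time) until one reaches a Gronwall inequality for $E(t)=\int_M(|\partial_t v|^2+|\nabla^g v|^2)\,dV_g\geq\|u(t)\|_{L^2}^2$ in terms of $\|F_*\|_{L^{p_1}(0,T;L^2)}$ with $F_*(t)=\int_0^t F(s)\,ds$.

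What the two routes buy: your duality approach is cleaner and essentially reduces the lemma to a single application of Proposition~\ref{p1} for the backward adjoint problem plus two transparent integrations by parts; the only price is the regularity justification you flag, which is indeed handled by smoothing and passing to the limit via \eqref{p1a}. The paper's energy approach is self-contained and avoids introducing an adjoint solution, but the treatment of the integrated first-order term $\int_0^t\A\nabla^{\g}u\,ds$ is noticeably more delicate (several sub-estimates, a Fubini argument, and careful splitting of the energy). Either proof yields the same constant dependence on $T$, $M$, $p_1$, $p_2$, $\|\A\|_{L^\infty}$, $\|q\|_{L^{p_1}L^{p_2}}$ and $\|\div_{\g}\A\|_{L^{p_1}L^{p_2}}$.
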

\begin{proof}
We set $v(t,x):=\int_0^t u(s,x)\,ds$ and note that $v$ solves
\bel{eqqgo2}\left\{ \begin{array}{rcll} -\Delta_{\g} v =  H,&  (t,x) \in \M ,\\
v(0,x)=0,\ \ \partial_t v(0,x)=0,& x\in M\\
 v(t,x)=0, & (t,x) \in (0,T)\times \pd M,& \end{array}\right.
\ee
with
$$H:=-\int_0^t\A(s,x)\nabla^{\g}u(s,x)\,ds-\int_0^tq(s,x)u(s,x)ds+\int_0^tF(s,x)\,ds.$$
Since $u$ is in the energy space \eqref{energyspace}, we deduce $v\in \mathcal C^2([0,T];L^2(M))\cap \mathcal C^1([0,T];H^1(M))$. In addition, since $qu\in L^{p_1}(0,T;L^2(M))$ (see \eqref{soem}) and $\A\in L^\infty(\M;T^*\M)$, we deduce that $H\in W^{1,p_1}(0,T;L^2(M))\subset L^2(\M)$ and that $v$ solves the elliptic boundary value problem
$$\left\{ \begin{array}{rcll} -\Delta_g v =  E,&  (t,x) \in (0,T)\times M ,\\
v(t,x)=0, & (t,x) \in (0,T)\times\partial M,& \end{array}\right.$$
with $E=-\partial_t^2v+H\in L^2(\M)$. Then, from the elliptic regularity of solutions of this boundary value problem, we get $v\in L^2(0,T;H^2(M))$ and it follows that $v\in H^2(\M)$. We define the energy $E(t)$ at time $t$ associated with $v$ and given by
\bel{l1f} E(t):=\int_M \left(|\partial_tv|^2(t,x)+|\nabla^gv|_g^2(t,x)\right)\,dV_g(x)\geq \int_M |u|^2\,dV_g(x).$$
Multiplying \eqref{eqqgo2} by $\overline{\partial_tv}$ and taking the real part, we find
$$\begin{aligned}E(t)=&-2\re \left(\int_0^t\int_M \left(\int_0^sq(\tau,x)u(\tau,x)\,d\tau\right)\overline{\partial_tv(s,x)}\,dV_g(x)\,ds\right)\\
&-2\re \left(\int_0^t\int_M \left(\int_0^s\A(\tau,x)\nabla^{\g}u(\tau,x)\,d\tau\right)\overline{\partial_tv(s,x)}\,dV_g(x)\,ds\right)\\
&+2\re \left(\int_0^t\int_M \left(\int_0^sF(\tau,x)\,d\tau\right)\overline{\partial_tv(s,x)}\,dV_g(x)\,ds\right).\end{aligned}\ee
Repeating some arguments of \cite[Lemma 3.1]{HK}, we find
\bel{l1a}\begin{aligned}&\abs{\int_0^t\int_M \left(\int_0^sq(\tau,x)u(\tau,x)\,d\tau\right)\overline{\partial_tv(s,x)}\,dV_g(x)\,ds}\\
&\leq C\norm{q}_{L^{p_1}(0,T; L^{p_2}(M))}^2\left(\int_0^t E(\tau)^{\frac{p_1}{(p_1-1)}}\,d\tau\right)^{\frac{(p_1-1)}{p_1}}+\frac{E(t)}{5},\end{aligned}\ee
\bel{l1b}\begin{aligned}&\abs{\int_0^t\int_M \left(\int_0^sF(\tau,x)\,d\tau\right)\overline{\partial_tv(s,x)}\,dV_g(x)\,ds}\\
&\leq \norm{F_{*}}_{L^{p_1}(0,T;L^2(M))}^2+T^{{\frac{p_1-1}{p_1}}}\left(\int_0^t E(\tau)^{\frac{p_1}{p_1-1}}\,d\tau\right)^{{\frac{p_1-1}{p_1}}},\end{aligned}\ee
where $F_{*}(t,x):=\int_0^tF(s,x)ds$ and $C>0$ depends on $M$ and $T$. In the same way, using the fact that $ \div_{\g} \A \in L^{p_1}(0,T;L^{p_2} (M))$ and $v\in \mathcal C^1([0,T];H^1_0(M))$, we get
\bel{l1c}\begin{aligned} &\int_0^t\int_M \left(\int_0^s\A\nabla^{\g}u(\tau,x)d\tau\right)\overline{\partial_tv(s,x)}\,dV_g(x)\,ds\\
&=-\int_0^t\int_0^s\int_M (\div_{\g} \A)u\,\overline{\partial_tv(s,x)}\,dV_g(x)\,d\tau\, ds-\int_0^t\int_M \int_0^su\,\A(\tau,x)\overline{\partial_t\nabla^{\g}v(s,x)}\,dV_g(x)\,d\tau\, ds\\
&\ \ \ \ \ -\int_0^t\int_M b(s,x)|\partial_tv(s,x)|^2\,dV_g(x)\, ds.\end{aligned}\ee
Repeating the arguments of \eqref{l1a}, we find
\bel{l1d}
\begin{aligned}
&\abs{\int_0^t\int_0^s\int_M (\div_{\g} \A)u(\tau,x)\overline{\partial_tv(s,x)}\,dV_g(x)\,d\tau\, ds+\int_0^t\int_M b(s,x)|\partial_tv(s,x)|^2\,dV_g(x)\, ds} \\
&\leq C\left(\int_0^t E(\tau)^{\frac{p_1}{(p_1-1)}}\,d\tau\right)^{\frac{(p_1-1)}{p_1}}+\frac{E(t)}{5},
\end{aligned}
\ee
with $C$ depending on $T$, $M$, $\norm{\div_{\g} \A}_{L^{p_1}(0,T; L^{p_2}(M))}$ and $\norm{ b}_{L^{\infty}(\M))}$. Moreover, applying Fubini's theorem, we have
$$\begin{aligned}&\int_0^t\int_M \int_0^s u(\tau,x)\A(\tau,x)\overline{\partial_t\nabla^{\g}v(s,x)}\,dV_g(x)\,d\tau\, ds\\
&=\int_M \int_0^tu\A(\tau,x)\left(\int_\tau^t\overline{\partial_t\nabla^{\g}v(s,x)}\,ds\right)\,d\tau \,dV_g(x)\\
&=\int_M \int_0^tu\A(\tau,x)\overline{\nabla^{\g}v(t,x)}\,dV_g(x)\,d\tau-\int_M \int_0^tu\A(\tau,x)\overline{\nabla^{\g}v(\tau,x)}\,d\tau \,dV_g(x) .\end{aligned}$$
It follows that
$$\begin{aligned}&\abs{\int_0^t\int_M \int_0^su\A(\tau,x)\overline{\partial_t\nabla^{\g}v(s,x)}\,dV_g(x)\,d\tau\, ds}\\
&\leq  \norm{\A}_{L^\infty(\M)} \left(\int_0^tE(\tau)^{\frac{1}{2}}\,d\tau\right) E(t)^{\frac{1}{2}}+\norm{\A}_{L^\infty(\M)}\int_0^tE(\tau)\,d\tau\\
&\leq  5\norm{\A}_{L^\infty(\M)}^2 \left(\int_0^tE(\tau)^{\frac{1}{2}}\,d\tau\right)^2+\frac{E(t)}{5}+\norm{\A}_{L^\infty(\M)}\int_0^tE(\tau)\,d\tau\\
 &\leq  5\norm{\A}_{L^\infty(\M)}^2 T \left(\int_0^tE(\tau)\,d\tau\right)+\frac{E(t)}{5}+\norm{\A}_{L^\infty(\M)}\int_0^tE(\tau)\,d\tau\\
 &\leq  (5\norm{\A}_{L^\infty(\M)}^2 T+\norm{\A}_{L^\infty(\M)}) \left(\int_0^tE(\tau)\,d\tau\right)+\frac{E(t)}{5}.\end{aligned}$$
Applying H\"older's inequality, we get
$$\begin{aligned}&\abs{\int_0^t\int_M \int_0^su\A(\tau,x)\overline{\partial_t\nabla^{\g}v(s,x)}\,dV_g(x)\,d\tau \,ds}\\
 &\leq  (5\norm{\A}_{L^\infty(\M)}^2 T+\norm{\A}_{L^\infty(\M)}) T^{\frac{1}{p_1}}\left(\int_0^tE(\tau)^{\frac{p_1}{(p_1-1)}}\,d\tau\right)^{\frac{(p_1-1)}{p_1}}+\frac{E(t)}{5}.\end{aligned}$$
Combining this with \eqref{l1d}, we deduce that
\bel{l1e}\abs{\int_0^t\int_M \int_0^su\A(\tau,x)\overline{\partial_t\nabla^{\g}v(s,x)}\,dV_g(x)\,d\tau \,ds}\leq C\left(\int_0^tE(\tau)^{\frac{p_1}{(p_1-1)}}\,d\tau\right)^{\frac{(p_1-1)}{p_1}}+\frac{2E(t)}{5},\ee
with $C$ depending only on $T$ and $\norm{\A}_{L^\infty(\M)}$. We deduce that there exists $C$ depending on $T$, $M$, $\norm{q}_{L^{p_1}(0,T; L^{p_2}(M))}$, $\norm{\A}_{L^\infty(\M)}$ and $\norm{\div_{\g} \A}_{L^{p_1}(0,T; L^{p_2}(M))}$ such that
$$E(t)\leq \frac{4E(t)}{5}+C\left(\int_0^t E(\tau)^{\frac{p_1}{p_1-1}}\,d\tau\right)^{{\frac{p_1-1}{p_1}}}+\norm{F_{*}}_{L^{p_1}(0,T;L^2(M))}^2$$
and therefore
$$E(t)\leq 5C\left(\int_0^t E(\tau)^{\frac{p_1}{p_1-1}}\,d\tau\right)^{{\frac{p_1-1}{p_1}}}+5\norm{F_{*}}_{L^{p_1}(0,T;L^2(M))}^2.$$
Applying the Gronwall inequality yields
$$E(t)^{\frac{p_1}{p_1-1}}\leq c_1\norm{F_{*}}_{L^{p_1}(0,T;L^2(M))}^{\frac{2p_1}{p_1-1}}e^{c_2t}\leq c_1\norm{F_{*}}_{L^{p_1}(0,T;L^2(M))}^{\frac{2p_1}{p_1-1}}e^{c_2T},\quad t\in(0,T),$$
where $c_1$ depends only on $p_1$ and $c_2$ on $C$ and $p_1$.
\end{proof}
\subsection{Dirichlet to Neumann map}
\label{DNformulation}
In this section we will derive a representation formula involving the DN map (Lemma~\ref{allesandrini}) and also recall some invariance properties of the DN map (Lemma~\ref{DNgauge}).

Let us consider the following problem 
\bel{eq4}
\left\{ \begin{array}{ll} L_{\A,q}^*v= -\Delta_{\g} v - \A\nabla^{\g} v+ (q-\div_{\g} \A) v  =  0, & \mbox{on}\ \M,\\  
v  =  h, & \mbox{on}\ (0,T)\times \pd M,\\  
 v(T,\cdot)  =0,\quad  \pd_tv(T,\cdot)  =0 & \mbox{on}\ M.
\end{array} \right.
\ee 
Here, the differential operator $L_{\A,q}^*$ represents the formal adjoint of $L_{\A,q}$. Repeating the arguments of the previous section we can prove that, for each $h \in H^1_0([0,T)\times \pd M)$, this problem admits a unique solution $v$ in energy space \eqref{energyspace}, with $\partial_{\bar\nu} v\in L^2((0,T)\times\partial M)$, satisfying the estimate
$$ \norm{\partial_{\bar\nu} v}_{L^2((0,T)\times\partial M)}+\|v\|_{\mathscr X} \leq C \|h\|_{H^1_0([0,T)\times \pd M)}.$$ 
Therefore, we can define the DN map associated with \eqref{eq4} as follows
\bel{DNadjoint}
\Lambda^*_{\A,q} h=\left(\pd_{\bar\nu} v +\frac{\A\bar\nu}{2}v\right)|_{(0,T)\times \pd M}.
\ee
It is straightforward to show that
\bel{adjoint1}   
<\Lambda_{\A,q}f,h>=<f,\Lambda_{\A,q}^*h> \quad \forall (f,h) \in H^1_0((0,T]\times \pd M)\times H^1_0([0,T)\times \pd M),
\ee
where $\langle f_1,f_2\rangle:=\int_{(0,T)\times\pd M}f_1f_2\,dV_{\g}$. Using this equality together with Green's identity, we can derive the following classical representation formula.
\begin{lem}
\label{allesandrini}
Let $\A_1,\A_2,q_1,q_2$ satisfy \eqref{regularity}. Given any $f_1 \in H^1_0((0,T]\times \pd M)$ and $f_2 \in H^1_0([0,T)\times \pd M)$, the following identity holds:
\bel{int-iden}
\langle (\Lambda_{\A_1,q_1}-\Lambda_{\A_2,q_2})f_1,f_2\rangle=\int_\M \left[\frac{u_2 \A\nabla^{\g}u_1-u_1 \A\nabla^{\g}u_2}{2}+(q-\frac{1}{2}\div_{\g}\A)u_1u_2\right]\,dV_{\g},
\ee
where $\A:=\A_1-\A_2$, $q:=q_1-q_2$, $u_1$ solves \eqref{eq1} with $\A=\A_1$, $q=q_1$ and lateral boundary term $f_1$ while $u_2$ solves \eqref{eq4} with $\A=\A_2$, $q=q_2$ and lateral boundary term $f_2$.
\end{lem}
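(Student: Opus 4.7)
The plan is to combine the definitions of the two Dirichlet-to-Neumann maps with the adjoint relation \eqref{adjoint1} and a Green-type identity applied to the pair $(u_1,u_2)$ on the spacetime $\M$, and then symmetrize the first order contribution by invoking the divergence theorem on $u_1u_2\A^{\sharp}$, where $\A^{\sharp}$ denotes the vector field dual to $\A$ with respect to $\g$. Since $\A_1,\A_2\in\CI(\M;T^*\M)\subset L^\infty$ and $q,\div_{\g}\A\in L^{p_1}(0,T;L^{p_2}(M))$, the Sobolev embedding \eqref{soem} makes every product below integrable over $\M$, and the trace bounds $\partial_{\bar\nu}u_j\in L^2((0,T)\times\pd M)$ furnished by Proposition~\ref{p1} legitimize all boundary integrals on the lateral face.

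First I would express the two pairings of interest in terms of lateral normal derivatives. From the definition of $\Lambda_{\A_1,q_1}$ together with $u_1|_{(0,T)\times\pd M}=f_1$,
\[
\langle \Lambda_{\A_1,q_1}f_1,f_2\rangle=\int_{(0,T)\times\pd M}f_2\,\pd_{\bar\nu}u_1\,dS_{\bar\g}-\frac{1}{2}\int_{(0,T)\times\pd M}\A_1\bar\nu\,f_1f_2\,dS_{\bar\g},
\]
while \eqref{adjoint1} and the definition \eqref{DNadjoint} of $\Lambda^*_{\A_2,q_2}$ with solution $u_2$ give
\[
\langle \Lambda_{\A_2,q_2}f_1,f_2\rangle=\langle f_1,\Lambda^*_{\A_2,q_2}f_2\rangle=\int f_1\,\pd_{\bar\nu}u_2\,dS_{\bar\g}+\frac{1}{2}\int \A_2\bar\nu\,f_1f_2\,dS_{\bar\g}.
\]
Subtracting yields an expression for $\langle(\Lambda_{\A_1,q_1}-\Lambda_{\A_2,q_2})f_1,f_2\rangle$ in terms of $\int(f_2\pd_{\bar\nu}u_1-f_1\pd_{\bar\nu}u_2)\,dS_{\bar\g}-\tfrac{1}{2}\int(\A_1+\A_2)\bar\nu f_1f_2\,dS_{\bar\g}$.

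Next I would apply Green's identity to $(u_1,u_2)$. A direct algebraic manipulation shows
\[
(L_{\A_1,q_1}u_1)u_2-u_1(L^*_{\A_2,q_2}u_2)=-(u_2\Delta_{\g}u_1-u_1\Delta_{\g}u_2)+\div_{\g}(u_1u_2\A_2^{\sharp})+u_2\A\nabla^{\g}u_1+q\,u_1u_2,
\]
and the left-hand side vanishes since $u_1$ and $u_2$ solve the corresponding equations. Integrating the first term by parts, the contributions from the initial and final time slices $\{0,T\}\times M$ drop out thanks to the vanishing Cauchy data for $u_1$ at $t=0$ and for $u_2$ at $t=T$, leaving only the lateral contribution $\int_{(0,T)\times\pd M}(f_2\pd_{\bar\nu}u_1-f_1\pd_{\bar\nu}u_2)\,dS_{\bar\g}$. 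For the divergence term, the divergence theorem together with the same vanishing at $t=0$ and $t=T$ reduces the integral to $\int_{(0,T)\times\pd M}f_1f_2\,\A_2\bar\nu\,dS_{\bar\g}$. Combining with the previous paragraph, the $\A_2\bar\nu$ boundary contribution cancels partially and we arrive at the intermediate identity
\[
\langle(\Lambda_{\A_1,q_1}-\Lambda_{\A_2,q_2})f_1,f_2\rangle=\int_{\M}[u_2\A\nabla^{\g}u_1+q\,u_1u_2]\,dV_{\g}-\frac{1}{2}\int_{(0,T)\times\pd M}\A\bar\nu\,f_1f_2\,dS_{\bar\g}.
\]

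The final step is symmetrization. Applying the divergence theorem to $\div_{\g}(u_1u_2\A^{\sharp})$ with the boundary contributions at $t=0,T$ again vanishing yields
\[
\int_{\M}[u_2\A\nabla^{\g}u_1+u_1\A\nabla^{\g}u_2+u_1u_2\div_{\g}\A]\,dV_{\g}=\int_{(0,T)\times\pd M}f_1f_2\,\A\bar\nu\,dS_{\bar\g}.
\]
Using this to replace half of $\int_{\M}u_2\A\nabla^{\g}u_1\,dV_{\g}$ in the intermediate identity symmetrizes the one-form contribution and at the same time contributes exactly $+\tfrac12\int f_1f_2\,\A\bar\nu\,dS$, which cancels the stray lateral boundary term. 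The resulting expression is precisely \eqref{int-iden}. The main technical obstacle I anticipate is justifying these integrations by parts rigorously at the stated regularity; this I would handle by a density argument, approximating $q$ and $\div_{\g}\A$ in $L^{p_1}(0,T;L^{p_2}(M))$ by smooth functions, establishing the identity in the smooth category, and then passing to the limit via the stability estimate \eqref{p1a} and the embedding \eqref{soem}.
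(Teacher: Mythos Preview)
Your proof is correct and follows exactly the route the paper indicates: the paper does not give a detailed argument but simply states that the identity follows from the adjoint relation \eqref{adjoint1} together with Green's identity, which is precisely what you have carried out (with the additional symmetrization via the divergence theorem on $u_1u_2\A^{\sharp}$ to bring the one-form term into the form of \eqref{int-iden}). Your remark about justifying the integrations by parts through density and the energy estimate \eqref{p1a} is an appropriate way to handle the limited regularity, and is consistent with the level of detail the paper provides.
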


We also have the following lemma regarding the gauge equivalence of the Dirichlet to Neumann map:

\begin{lem}
\label{DNgauge}
Let $\A,q$ satisfy \eqref{regularity}. Suppose $\psi \in \CI^1(\M)$ vanishes on $(0,T)\times \pd M$ and satisfies $\Delta_{\g}\psi \in L^{p_1}(0,T;L^{p_2}(M))$. Then 
$$\Lambda_{\A,q}=\Lambda_{\tilde{\A},\tilde{q}},$$ 
where
\bel{gauge}
\tilde{\A}=\A+\bar{d}\psi \quad \text{and}\quad \tilde{q}=q+\frac{1}{2}\Delta_{\g}\psi-\frac{1}{2}\A\nabla^{\g}\psi-\frac{1}{4}\langle \nabla^{\g}\psi,\nabla^{\g}\psi\rangle_{\g}.
\ee 
\end{lem}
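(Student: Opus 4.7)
The natural strategy is classical gauge conjugation: given the unique solution $u$ of \eqref{eq1} with coefficients $(\A,q)$ and lateral data $f$, set $\tilde u := e^{\psi/2}u$ and show that $\tilde u$ is the unique solution of the IBVP \eqref{eq1} with coefficients $(\tilde\A,\tilde q)$ and the \emph{same} boundary data $f$. Once this is in hand, equality of the two DN maps will follow by comparing Neumann traces on $(0,T)\times\pd M$, where $\psi\equiv 0$ forces the correction terms to cancel.

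The first step is the operator identity
\[
L_{\A,q}(e^{-\psi/2}\tilde u)\;=\;e^{-\psi/2}\,L_{\tilde\A,\tilde q}\tilde u.
\]
This is a direct application of the product and chain rules. Expanding $\Delta_{\g}(e^{-\psi/2}\tilde u)$ produces a first-order correction $-\nabla^{\g}\psi\cdot\nabla^{\g}\tilde u$ together with zeroth-order corrections involving $-\tfrac12\Delta_{\g}\psi$ and $+\tfrac14\langle\nabla^{\g}\psi,\nabla^{\g}\psi\rangle_{\g}$, while expanding $\A\nabla^{\g}(e^{-\psi/2}\tilde u)$ produces an extra $-\tfrac12(\A\nabla^{\g}\psi)\,\tilde u$ term. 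Collecting the first-order terms yields $(\A+\bar d\psi)\nabla^{\g}\tilde u$, and collecting the zeroth-order terms yields exactly the expression for $\tilde q$ in \eqref{gauge}.

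Next I would transfer the boundary and initial conditions. On $(0,T)\times\pd M$ the multiplier $e^{\psi/2}$ equals $1$, so $\tilde u|_{(0,T)\times\pd M}=f$, and the zero Cauchy data $u(0,\cdot)=\pd_t u(0,\cdot)=0$ are clearly inherited by $\tilde u$ and $\pd_t\tilde u=e^{\psi/2}(\pd_t u+\tfrac12(\pd_t\psi)u)$. Hence $\tilde u$ is the unique solution associated with $(\tilde\A,\tilde q,f)$. Since $\psi|_{\pd\M}=0$ the tangential component of $\bar d\psi$ vanishes on the lateral boundary, so $(\bar d\psi)\bar\nu=\pd_{\bar\nu}\psi$ there, and one computes
\[
\pd_{\bar\nu}\tilde u-\tfrac12\tilde\A\bar\nu\,\tilde u \;=\; \bigl(\pd_{\bar\nu}u+\tfrac12(\pd_{\bar\nu}\psi)u\bigr)-\tfrac12\bigl(\A\bar\nu+\pd_{\bar\nu}\psi\bigr)u \;=\; \pd_{\bar\nu}u-\tfrac12\A\bar\nu\,u,
\]
i.e.\ $\Lambda_{\tilde\A,\tilde q}f=\Lambda_{\A,q}f$ for every admissible $f$.

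The main technical point I expect to have to treat carefully is justifying these manipulations at the stated regularity. Since $\psi\in\CI^1(\M)$, the factor $e^{\pm\psi/2}$ is $C^1$ and multiplication by it preserves the energy space $\mathscr X$, so the substitution $\tilde u=e^{\psi/2}u$ is legitimate in a weak sense. The hypothesis $\Delta_{\g}\psi\in L^{p_1}(0,T;L^{p_2}(M))$ is used twice: first, to give meaning to the $-\tfrac12(\Delta_{\g}\psi)\tilde u$ term in the operator identity (the product $(\Delta_{\g}\psi)\tilde u$ lying in $L^{p_1}(0,T;L^2(M))$ by \eqref{soem}); second, to guarantee that $(\tilde\A,\tilde q)$ themselves satisfy \eqref{regularity}, so that $\Lambda_{\tilde\A,\tilde q}$ is a bona fide operator to which Proposition~\ref{p1} applies. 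Concretely $\div_{\g}\tilde\A=\div_{\g}\A+\Delta_{\g}\psi\in L^{p_1}(0,T;L^{p_2}(M))$, while the additional contributions to $\tilde q$ beyond $\tfrac12\Delta_{\g}\psi$ are continuous on $\M$ and hence lie trivially in the required space. A short density argument (approximating $\tilde u$ by smooth functions via the well-posedness of \eqref{eq1}) then transfers the pointwise algebraic identity to the variational formulation of the IBVP and concludes the proof.
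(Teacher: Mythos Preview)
Your approach is correct and is exactly the one the paper uses: conjugate by $e^{\psi/2}$, observe that $\tilde u=e^{\psi/2}u$ solves \eqref{eq1} with the gauged coefficients and the same lateral data, and then compare the Neumann traces on $(0,T)\times\partial M$ using $\psi|_{(0,T)\times\partial M}=0$. The paper's proof is in fact considerably terser than yours---it simply asserts the conjugation identity and writes out the boundary computation---so your added discussion of the operator identity and of why $(\tilde\A,\tilde q)$ inherits the needed regularity is extra care rather than a different method.
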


\begin{proof}
We start by observing that if $u$ solves differential equation \eqref{eq1} with coefficients $\A, q$ and a lateral boundary condition $f$, then $\tilde{u}=e^{\frac{1}{2}\psi}u$ solves equation \eqref{eq1} with coefficients $\tilde{\A}, \tilde{q}$ and the same lateral boundary condition $f$. Then it follows that
$$\begin{aligned}\Lambda_{\tilde{\A},\tilde{q}}f&=\left(\pd_{\bar\nu} \tilde{u} - \frac{\tilde{\A}\bar\nu}{2}\tilde{u}\right)|_{(0,T)\times \pd M}=\left(\pd_{\bar\nu}u+\frac{\partial_{\bar\nu} \psi}{2}u - \frac{\tilde{\A}\bar\nu}{2}u\right)|_{(0,T)\times \pd M}\\
\ &=\left(\pd_{\bar\nu}u- \frac{\A\bar\nu}{2}u\right)|_{(0,T)\times \pd M}=\Lambda_{\A,q}f.\end{aligned}$$
\end{proof}

\subsection{Smooth approximation of the coefficients $\A$ and $q$}
\label{smoothapp}

The goal of this section is to show that given one-forms $\A_{k}, k=1,2$ satisfying \eqref{regularity}, it is possible to find smooth approximations $\A_{k,\rho}$ that are defined in a slightly larger manifold $\hat{\M}$ and such that \eqref{mollified} holds. Let $M\subset \hat{M}^\inter\subset \tilde{M}^\inter$ denote a small artificial extension of the simple manifold $M$, so that $\hat{M},\tilde{M}$ are also simple manifolds and define $\hat{\M}=\R\times \hat{M}$. We first consider the Sobolev extension of $\A_k, k=1,2$ to the larger manifold $\R\times M$ such that the extension belongs to $W^{1,1}(\R;L^{2}(M)) \cap \CI(\R\times M)$ and then extend this extended one-form to $\R\times \tilde{M}$ by setting it equal to zero on $\R\times (\tilde{M}\setminus M)$. The scalar functions $q_k$ are extended to $\R\times \tilde{M}$ by setting them equal to zero outside of $\M$. Let $p \in \tilde{M}\setminus \hat{M}$. As $\tilde{M}$ is simple, there exists a global coordinate chart on a neighborhood of $\hat{M}$ given by $(y^1,\ldots,y^n)$. Indeed one such coordinate system would be the polar normal coordinates around a point $p \in \tilde{M}\setminus \hat{M}$ (see for example  \cite[Chaper 9, Lemma 15]{Sp}). We then consider the coordinate chart $(t,y^1,\ldots,y^n)$ on a neighborhood of $\hat{\M}$ in $\R \times \tilde{M}$ and note that using this chart we can easily define smooth approximations of the coefficients $\A_k,q_k$. Indeed, let $\rho>0$ and define the smooth function $\zeta_\rho:\hat{M}\to \R$ through $$\zeta_{\rho}(t,y)=\rho^{\frac{n+1}{4}}\chi(\rho^{\frac{1}{4}}\sqrt{t^2+(y^1)^2+\ldots+(y^n)^2})$$ where $\chi:\mathbb R\to \mathbb R$ is a non-negative smooth function satisfying $\chi(t)=1$ for $|t|<\frac{1}{4}$ and $\chi=0$ for $|t|>\frac{1}{2}$ and $\|\chi\|_{L^1(\R)}=1$. We define the smooth approximations $\A_{k,\rho}$ of the coefficients $\A_k$ through the expressions
\bel{smooth app}
 \A_{k,\rho}(t,x):=(\A_k*\zeta_\rho)(t,x)=b_{k,\rho}\,dt+A_{k,\rho} \quad \forall (t,x) \in \hat{\M} \quad k=1,2.
\ee 
and note that in view of \eqref{regularity}, the following estimates hold for $k=1,2$:
\bel{mollified}
\begin{aligned}
&\lim_{\rho\to \infty}\left(\|\A_{k,\rho}-\A_k\|_{W^{1,1}(0,T;L^{2}(M))}+\|\A_{k,\rho}-\A_k\|_{L^p(\M)}\right)=0\quad \forall \, p\in [1,\infty),\\
&\|\A_{k,\rho}\|_{W^{k,\infty}(\M)}\lesssim \rho^{\frac{k}{4}} \quad \forall k \in \N^*.
\end{aligned}
\ee
Additionally, since $\A \in \CI(\M;T^*\M)$ we can write
\bel{cont}
\lim_{\rho \to \infty} \|\A_{k,\rho}-\A\|_{\CI((0,T)\times \Omega_\rho)} =0,\quad k=1,2
\ee
where $\Omega_{\rho}=\{x \in \tilde{M}\,|\, \dis{(x,\pd M)}\gtrsim \rho^{-\frac{1}{4}}\}.$
\subsection{Light ray transform as a Fourier Integral Operator}
\label{FIO}

The main goal of this section is to extend the notion of $\mathcal L_{\beta}$ over scalar functions in $L^p$ Sobolev spaces. This extension is based on showing that $\mathcal L$ is a Fourier Integral Operator. We will assume through out this section that $(\M ,\g)$ and $(M,g)$ are as discussed in the introduction and that $M\subset \hat{M}^\inter$ with $\hat{M}$ as in Section~\ref{smoothapp}. We start with the notion of light ray transform $\mathcal L$ of scalar functions over null geodesics in $\R\times\hat{M}$ showing that it has a unique continuous extension as an operator from $\E'(\R \times \hat{M})$ to $\D'(\R \times \pd_- S\hat{M})$. This would naturally show that the light ray transform $\mathcal L_{\beta}$ of scalar functions over null geodesics on $\M$ has a continuous extension from $L^{1}(0,T;L^{2}(M))$ to  $\D'(\R\times \pd_-SM)$ as $L^{1}(0,T;L^{2}(M))\subset \E'(\R \times \hat{M})$.

We will now show that the kernel of $\mathcal L$ is locally represented by an oscillatory integral. It suffices to consider $f \in \CI_c^\infty(\R \times \hat{M})$ that is supported in a coordinate neighborhood and work in local coordinates on $\hat{M}$. Let us also extend the geodesics $\gamma(\cdot; y,v)$, $(y,v) \in \pd_- S\hat{M}$, as functions from $\R$ to $\hat{M}$ so that $\gamma(s;y,v) \notin \supp(f)$ for $s \notin [0, \tau_+(x,v)]$.
Then in local coordinates
    \begin{align*}
\mathcal L f(s,y,v) 
&= 
\int_\R f(r+s,\gamma(r;y,v))\, dr
= 
\int_\R \int_{\R^n} f(t, x) \delta(x-\gamma(t-s; y,v)) \,dx\, dt,
    \end{align*}
and writing $\phi(x,t;s,y,v;\xi) = \xi(x-\gamma(t-s; y,v))$ it holds that 
$$
\delta(x-\gamma(t-s; y,v))
= \int_{\R^n} e^{i \phi(x,t;s,y,v;\xi)} \,d\xi.
$$
Moreover, $\phi$ is an operator phase function in the sense of \cite[Def. 1.4.4]{FIO1}.
Indeed for fixed $(s,y,v)$ it clearly has no critical points when $\xi \ne 0$. That the same is true for fixed $(t,x)$ follows from the next lemma. 

\begin{lem}
Let $(y_0,v_0) \in \pd_- S\hat{M}$ and $r_0 \in (0, \tau_+(y_0,v_0))$, and consider a small neighborhood $U$ of $(r_0, y_0, v_0)$ in $\R \times \pd_- S\hat{M}$. Then $\gamma(r;y,v)$ as a map from $U$ to $\hat{M}$ has surjective differential at $(r_0, y_0, v_0)$. 
\end{lem}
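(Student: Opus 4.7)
The plan is to reduce the surjectivity statement to a dimension count combined with the no-conjugate-points property of the simple manifold $\hat M$. Since the target $\hat M$ has dimension $n$ while $\mathbb R \times \partial_- S\hat M$ has dimension $2n-1$, we have more than enough directions; the strategy is to discard the $\partial M$-direction and show that already the $n$-dimensional subfamily obtained by freezing the base point $y = y_0$ produces a surjective differential.

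First, I would consider the restricted map
\[
\Psi : (r,v) \longmapsto \gamma(r;y_0,v) = \exp_{y_0}(rv),
\]
defined for $(r,v)$ in a neighborhood of $(r_0,v_0)$ inside $(0,\infty) \times S_{y_0}\hat M$, a manifold of dimension $1 + (n-1) = n$. Since the fiber $\{y_0\} \times (S_{y_0}\hat M \cap \partial_- S\hat M)$ is a submanifold of $\partial_- S\hat M$ containing $(y_0,v_0)$ in its interior (because $v_0$ is strictly inward-pointing), surjectivity of $d\Psi_{(r_0,v_0)}$ immediately implies surjectivity of $d\Phi_{(r_0,y_0,v_0)}$ for $\Phi(r,y,v) = \gamma(r;y,v)$. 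Because $\dim \Psi$-domain equals $\dim \hat M = n$, it suffices to prove that $d\Psi_{(r_0,v_0)}$ is injective.

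Next, I would compute $d\Psi$ directly. For $(\lambda,w) \in \mathbb R \times T_{v_0}S_{y_0}\hat M$, noting that the last factor means $\langle w,v_0\rangle_g = 0$, differentiating through the exponential gives
\[
d\Psi_{(r_0,v_0)}(\lambda,w) = \lambda\, \dot\gamma(r_0;y_0,v_0) + r_0\, (d\exp_{y_0})_{r_0 v_0}(w).
\]
Using the standard identity $\dot\gamma(r_0;y_0,v_0) = (d\exp_{y_0})_{r_0 v_0}(v_0)$, this rewrites as
\[
d\Psi_{(r_0,v_0)}(\lambda,w) = (d\exp_{y_0})_{r_0 v_0}\bigl(\lambda v_0 + r_0 w\bigr).
\]
Suppose this vanishes. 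Since $(\hat M,g)$ is simple, no geodesic in $\hat M$ has conjugate points, so $(d\exp_{y_0})_{r_0 v_0}$ is injective. Therefore $\lambda v_0 + r_0 w = 0$; pairing with $v_0$ and using $\langle w,v_0\rangle_g = 0$ yields $\lambda = 0$, hence $w = 0$.

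The only substantive ingredient is the no-conjugate-points hypothesis built into simplicity of $\hat M$; everything else is a one-line dimension count and a direct computation of the differential through the exponential map. There is no real obstacle to the proof — this is essentially the observation that on a simple manifold the polar exponential map $(r,v) \mapsto \exp_{y_0}(rv)$ is a local diffeomorphism on $(0,\infty)\times S_{y_0}\hat M$.
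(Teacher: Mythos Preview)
Your proof is correct, but the route is genuinely different from the paper's. The paper argues constructively: given any $\xi_* \in T_{x_0}\hat M$ with $x_0 = \gamma(r_0;y_0,v_0)$, it picks a curve $\alpha$ through $x_0$ with $\dot\alpha(0)=\xi_*$, shoots geodesics \emph{backward} from $\alpha(\epsilon)$ with a fixed (coordinate-constant) direction $-w_0 = -\dot\gamma(r_0;y_0,v_0)$, and uses transversality of $v_0$ to $\partial\hat M$ together with the implicit function theorem to produce a smooth curve $(r(\epsilon),y(\epsilon),v(\epsilon))$ in $\R\times\partial_-S\hat M$ with $\gamma(r(\epsilon);y(\epsilon),v(\epsilon))=\alpha(\epsilon)$. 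Differentiating at $\epsilon=0$ hits $\xi_*$. Notably, this does not invoke the no-conjugate-points property at all; it only needs that inward-pointing geodesics meet the boundary transversally. Your argument instead freezes $y=y_0$, reduces to the polar map $(r,v)\mapsto\exp_{y_0}(rv)$ on an $n$-dimensional domain, and uses the absence of conjugate points (built into simplicity of $\hat M$) to make $(d\exp_{y_0})_{r_0v_0}$ injective. Your version is shorter and more transparent, but genuinely leans on simplicity; the paper's version is slightly more involved yet would work on any non-trapping manifold with strictly convex boundary. In the present setting, where $\hat M$ is simple by construction, both are equally valid.
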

\begin{proof}
Write $x_0 = \gamma(r_0; y_0, v_0)$, $w_0 = \dot \gamma(r_0; y_0, v_0)$ and let $\xi_* \in T_{x_0} \hat{M}$. Choose a path $\alpha$ in $\hat{M}$ such that $\alpha(0) = x_0$ and $\dot \alpha(0) = \xi_*$. Consider $-w_0$ in local coordinates as a vector in all $T_{\alpha(\epsilon)} \hat{M}$ for small $\epsilon > 0$. As
$$
\gamma(r_0; x_0, -w_0) = y_0, 
\quad 
\dot \gamma(r_0; x_0, -w_0) = -v_0 \notin T_{y_0}(\pd\hat{M}),
$$
it follows from the implicit function theorem that there is unique $r(\epsilon)$ near $r_0$ such that $\gamma(r(\epsilon); \alpha(\epsilon), -w_0) \in \pd \hat{M}$.
Writing 
$$
y(\epsilon) = \gamma(r(\epsilon); \alpha(\epsilon), -w_0), 
\quad 
v(\epsilon) = -\dot \gamma(r(\epsilon); \alpha(\epsilon), -w_0),
$$
we have that $\gamma(r(\epsilon); y(\epsilon), v(\epsilon)) = \alpha(\epsilon)$.
Hence the differential of the map $\gamma$ takes vectors $(\dot r(0), \dot y(0), \dot v(0))$ to $\xi_* = \dot \alpha(0)$.
\end{proof}
As $\phi$ is an operator phase function, 
the light ray transform $\mathcal L$ has a unique continuous extension as an operator from $\E'(\R \times \hat{M})$ to $\D'(\R \times \pd_- S\hat{M})$ by
\cite[Th. 1.4.1]{FIO1}.

\section{Geometric Optics}
\label{GOsection}
Throughout this section we consider one-forms $\A_1,\A_2$ and scalar functions $q_1,q_2$ to satisfy regularity conditions given by \eqref{regularity} and consider their extensions to the manifold $\tilde{\M}$ and their smooth approximations on the manifold $\hat{\M}$ as outlined in Section~\ref{smoothapp}. We consider a fixed null geodesic $\beta \subset \mathcal D$ parametrized with respect to the time variable. The projection of this null-geodesic on $M$ is denoted by the (Riemannian) unit speed geodesic $\gamma(\cdot;y,v)$ defined over its maximal domain $I=[0,\tau_+(y,v)]$. We extend $\gamma$ to $\tilde{M}$ and let the interval $\hat{I}=[-\hat{\delta}_-,\tau_+(y,v)+\hat{\delta}_+]$ to denote the maximal domain of definition of $\gamma$ on the manifold $\hat{M}$. Subsequently we can parametrize the extended null geodesic $\beta$ on $\hat{\M}$ through
$$\beta(t;s,y,v)=(s+t,\gamma(r;y,v)) \quad \text{for}\quad  t\in\hat{I},$$
where $s \in \R$ is a constant. We are interested in constructing the so called geometric optic solutions $u_1, u_2$ in energy space \eqref{energyspace}, of the problems
\bel{Gsol}
\begin{array}{ll}
\left\{\begin{array}{l}
-\Delta_{\g}u_1+\A_1\nabla^{\g}u_1+q_1u_1=0,\quad (t,x)\in \M,\\ 
u_1(0,x)=\partial_tu_1(0,x)=0,\quad  x\in M,\end{array}\right.
\\
\\
\left\{\begin{array}{l}
-\Delta_{\g}u_2-\A_2\nabla^{\g}u_2+(q_2-\div_{\g}\A_2)u_2=0,\quad (t,x)\in \M,\\ 
u_2(T,x)=\partial_tu_2(T,x)=0,\quad  x\in M,
\end{array}\right.
\end{array}
\ee
taking the form
\bel{go1}u_1(t,x)=e^{ i\rho\Phi(t,x)}c_{1,\rho}(t,x)+R_{1,\rho}(t,x),\quad (t,x)\in \M,\ee
\bel{go2}u_2(t,x)=e^{-i\rho \Phi(t,x)}c_{2,\rho}(t,x)+R_{2,\rho}(t,x),\quad (t,x)\in \M,\ee
with $\rho>1$. The phase function $\Phi$ and the smooth amplitude functions $c_{j,\rho}, j=1,2$ are constructed in a way that the principal terms $e^{i\rho\Phi}c_{j,\rho}$ are compactly supported near the null geodesic $\beta$. The remainder terms $R_{j,\rho}$ asymptotically converge to zero as $\rho \to \infty$. 

As we are interested in a particular null geodesic $\beta$, we outline a polar normal coordinate system specific to this null geodesic. We start by considering a point $p$ on $\{0\}\times \gamma$ with $p \in \{0\}\times (\tilde{M}\setminus \hat{M})$ and construct the polar normal coordinates $(t,r,\theta)$ about the point $p$ defined for $r>0$ and $\theta \in S_p \tilde{M}=\{ v \in T_p \tilde{M}\,|\, |v|_g=1\}$ through the diffeomorphism $(t,x)=(t,\exp(r\theta))$. In this coordinate system the metric $\g$ is smooth away from the point $p$ and takes the form
\bel{}
\g(t,r,\theta)=-(dt)^2+(dr)^2+g_0(r,\theta),
\ee
where $g_0$ is a Riemannian metric on $S_p\tilde{M}$. As we will be only considering this coordinate system on the manifold $\hat{\M}$ and owing to the fact that $\hat{M}$ is simple, we can identify $\theta$ with a globally defined coordinate system $(\theta^1,\ldots,\theta^{n-2}) \in \R^{n-2}$. This can in fact be done in such a manner that the null geodesic $\beta$ on $\hat{\M}$ can be represented with coordinates $(s+s_0,s,0,\ldots,0)$ with $s \in \hat{I}$.

In order to make the analysis simpler, we will introduce a new coordinate system near $\beta$ denoted by $(z^0,z^1,\ldots,z^n)$ in terms of the polar normal coordinates $(t,r,\theta)$ on $\hat{\M}$ given by
\begin{itemize}
\item[(i)]{$z^0:=\frac{1}{\sqrt{2}}(t+r)$,}
\item[(ii)]{$z^1:=\frac{1}{\sqrt{2}}(-t+r+s_0)$,}
\item[(iii)]{$z^j:=\theta^j$ for $j=2,\ldots,n$.}
\end{itemize}
In this coordinate system, the null geodesic $\beta$ on $\hat{\M}$ is given by the coordinates $(s,0)$ with $s \in (a_0,b_0)$ for some constants $a_0,b_0$. Furthermore the metric $\g$ takes the form
\bel{metric}
\g(z)=2\,dz^0\,dz^1+\sum_{j,k=2}^n g_{jk}(z)\,dz^j\,dz^k.
\ee
We define a tubular neighborhood around the null geodesic $\beta$ where the amplitude functions are compactly supported, as follows:
\bel{tubular}
\mathcal V_{\beta} = \{z \in \hat{\M}\,|\, z^0 \in [a_0,b_0],\quad |z'|:=\sqrt{|z^1|^2+\ldots+|z^n|^2}<\delta'\},
\ee 
where $\delta'>0$ is sufficiently small that the set $\mathcal V_\beta$ is disjoint from $\{0\}\times M$ and $\{T\}\times M$. This can be guaranteed due to the assumption $\beta \subset \mathcal D$. 
\subsection{Construction of the Geometric Optics}
We proceed to carry out the construction of the geometric optic solutions to \eqref{Gsol} in detail. We impose to the remainder term 
$$R_{k,\rho}\in  \mathcal C([0,T];H^1_0( M))\cap \mathcal C^1([0,T];L^2( M)),\quad k=1,2$$ 
the following decay property
\bel{GO2} \lim_{\rho\to+\infty}(\norm{R_{k,\rho}}_{\CI(0,T;L^2(M))}+\rho^{-1}\norm{R_{k,\rho}}_{H^1(\M)})=0.\ee
To prove the decay of $R_{k,\rho}$ with respect to $\rho$, given by \eqref{GO2}, we need to suitably construct $\Phi$, $c_{1,\rho}$, $c_{2,\rho}$. We write
\bel{conjugated}
\begin{aligned}
&L_{\A_{1,\rho},q_1}(e^{i\rho\Phi}c_{1,\rho})=e^{i\rho\Phi}\left(\rho^2\mathcal S \Phi -i\rho\mathcal T_{\A_{1,\rho}} c_{1,\rho}+L_{\A_{1,\rho},q_1}c_{1,\rho}\right),\\
&L^*_{\A_{2,\rho},q_2}(e^{-i\rho \Phi}c_{2,\rho})=e^{-i\rho\Phi}\left(\rho^2\mathcal S \Phi +i\rho\mathcal T_{-\A_{2,\rho}} c_{1,\rho}+L^*_{\A_{2,\rho},q_2}c_{2,\rho}\right),
\end{aligned}
\ee
where 
\bel{eikonal-transport}
\mathcal S\Phi:=\langle \nabla^{\g}\Phi,\nabla^{\g}\Phi\rangle_{\g}\quad \text{and}\quad \mathcal T_{\A}\cdot=2\langle\nabla^{\g}\Phi,\nabla^{\g}\cdot\rangle_{\g}+(-\A\nabla^{\g}\Phi+\Delta_{\g}\Phi)\cdot.
\ee
We proceed to determine the phase function $\Phi(t,x)$ such that the eikonal equation 
\bel{eikonal}
\mathcal{S}\Phi=0 \quad \text{on}\quad \M
\ee
is satisfied. The amplitude functions $c_{k,\rho}(t,x)$ for $k=1,2$ are constructed such that the transport equations
\bel{transport}
\mathcal T_{\A_{1,\rho}}c_{1,\rho}=0\quad \text{and}\quad \mathcal T_{-\A_{2,\rho}}c_{2,\rho}=0 \quad \text{on}\quad \M
\ee
hold. Let us start with the eikonal equation. Existence of global smooth solutions to this equation is not guaranteed in general, but owing to the assumption that the manifold is simple, we can find plenty of such solutions. Indeed for the remainder of this section, we will be working in the $z$ coordinate system defined earlier. Recall that this coordinate system is well-defined on $\hat{\M}$ and the null geodesic $\beta$ on $\hat{\M}$ is represented by $(s,0)$ with $s \in [a_0,b_0]$. Recalling the form of the metric from \eqref{metric}, we pick
\bel{phase}
\Phi(z)=z^1.
\ee
To determine the amplitude functions, we first use \eqref{metric} again to rewrite the transport equations \eqref{transport} as 
\bel{c1r}\pd_{z_0}c_{1,\rho}+\left( \frac{\pd_{z_0}\log |g|}{4}-\frac{(\A_{1,\rho})_0}{2}\right)c_{1,\rho}=0,\ee
\bel{c2r}\pd_{z_0}c_{2,\rho}+\left( \frac{\pd_{z_0}\log |g|}{4}+\frac{(\A_{2,\rho})_0}{2}\right)c_{2,\rho}=0,\ee
where $(\A_{k,\rho})_0:=\A_{k,\rho}\nabla^{\g}\Phi$ for $k=1,2$ and in particular we have
\bel{A tangent} 
(\A_{k,\rho})_0\,|_{\beta}=\A_{k,\rho}\dot{\beta}.
\ee
We can take $c_{k,\rho}$ as follows:
\bel{cc1} 
c_{1,\rho}(z):=|g(z)|^{-1/4}\chi(\frac{|z'|}{\delta})\exp\left(\frac{1}{2}\int_{a_0}^{z^0}[(\A_{1,\rho})_0(s,z')]\,ds\right),
\ee
and
\bel{cc2} 
c_{2,\rho}(z):=|g(z)|^{-1/4}\chi(\frac{|z'|}{\delta})\exp\left(-\frac{1}{2}\int_{a_0}^{z^0}[(\A_{2,\rho})_0(s,z')]\,ds\right),
\ee
where $\chi$ is as defined in Section~\ref{smoothapp} and $\delta<\delta'$ (see \eqref{tubular}). It is clear that the amplitude functions $c_{k,\rho}$ are compactly supported in the set $\mathcal V_\beta$ and as a result
\bel{c2} c_{k,\rho}(s,x)=\partial_tc_{k,\rho}(s,x)=0,\quad \text{for}\quad k=1,2,\ s\in\{0,T\},\ x\in M.\ee

\noindent With the construction of the phase and amplitude functions completed as above, we let 
$$F_{1,\rho}=-L_{\A_1,q_1}\left[c_{1,\rho}e^{ i\rho\Phi}\right],\quad F_{2,\rho}=-L^*_{\A_2,q_2}\left[c_{2,\rho}e^{-i\rho\Phi}\right]$$
and we recall that \eqref{eikonal}-\eqref{transport} imply that 
\bel{f1} F_{1,\rho}= -e^{ i\rho\Phi}\left[L_{\A_{1,\rho},q_1}c_{1,\rho}+i\rho(\A_1-\A_{1,\rho})\nabla^{\g}\Phi\, c_{1,\rho}\right],\ee
\bel{f2} F_{2,\rho}= -e^{ -i\rho\Phi}\left[L^*_{\A_{2,\rho},q_2}c_{2,\rho}+i\rho(\A_2-\A_{2,\rho})\nabla^{\g}\Phi\, c_{2,\rho}\right].\ee
We define the expression $R_{j,\rho}$, $j=1,2$, by the solution of the following IBVP
\bel{eqgo1}\left\{ \begin{array}{rcll} L_{\A_1,q_1}R_{1,\rho} & =  F_{1,\rho},&  (t,x) \in \M ,\\
R_{1,\rho}(0,x)=0,\ \ \partial_t R_{1,\rho}(0,x)&=0,& x\in M\\
 R_{1,\rho}(t,x)=0,& \  & (t,x) \in (0,T)\times \pd M ,& \end{array}\right.
\ee
\bel{eqgo2}\left\{ \begin{array}{rcll} L^*_{\A_2,q_2}R_{2,\rho} & =  F_{2,\rho},&  (t,x) \in \M ,\\
R_{2,\rho}(T,x)=0,\ \ \partial_t R_{2,\rho}(T,x)&=0,& x\in M\\
 R_{2,\rho}(t,x)=0,& \  & (t,x) \in (0,T)\times \pd M .& \end{array}\right.
\ee

In order to complete the construction of the solutions $u_1,u_2$ of \eqref{Gsol}, we only need to check the decay of the expression $R_{k,\rho}$, $k=1,2$, given by \eqref{GO2}. According to \eqref{smooth app}-\eqref{mollified}, we have
\bel{cc}\begin{aligned} &\norm{c_{j,\rho}}_{W^{k,\infty}(\M)}\leq C_k\rho^{\frac{k}{4}},\quad k\in\mathbb N^*,\\
&\norm{c_{j,\rho}}_{W^{1,1}(0,T; L^{2}(M))}\leq C,\end{aligned}\ee
with $C$ and $C_k$ independent of $\rho$. Combining this with \eqref{f1}-\eqref{f2}, we find
$$\norm{F_{j,\rho}}_{L^{p_1}(0,T;L^2(M))}\leq C(\rho^{\frac{1}{2}}+\rho\norm{\A_{j,\rho}-\A_j}_{L^{p_1}(0,T;L^2(M))}),\ j=1,2.$$
Using \eqref{mollified} again and the estimate \eqref{p1a} it follows that
$$\lim_{\rho\to+\infty}\rho^{-1}\norm{R_{j,\rho}}_{H^1(\M)}\leq C\lim_{\rho\to+\infty}\rho^{-1}\norm{F_{j,\rho}}_{L^{p_1}(0,T;L^2(M))}=0,\quad j=1,2.$$
Therefore, in order to prove \eqref{GO2}, it only remains to prove that
\bel{GObis}\lim_{\rho\to+\infty}\norm{R_{j,\rho}}_{\CI(0,T;L^2(M))}=0,\quad j=1,2.\ee

\begin{proof}[Proof of Estimate~\ref{GObis}]
The result for $R_{1,\rho}$ and $R_{2,\rho}$ being similar, we will only consider this claim for $R_{1,\rho}$. In view of Lemma~\ref{integral estimate}, the proof of the estimate will be completed if we show that
\bel{l1g}\lim_{\rho\to+\infty}\norm{F_{*,\rho}}_{L^{p_1}(0,T;L^2(M))}=0,\ee
where $F_{*,\rho}(t,x):=-\int_0^tF_{1,\rho}(s,x)\,ds$. Recall that
\bel{l1n}\begin{aligned}
F_{*,\rho}(t,x)=&\int_0^t\left[e^{ i\rho\Phi(\tau,x)}\left[L_{\A_{1,\rho},q_1}c_{1,\rho}(\tau,x)+i\rho (\A_1-\A_{1,\rho})\nabla^{\g}\Phi\, c_{1,\rho}(\tau,x)\right]\right]\,d\tau\\
=&\underbrace{\int_0^t\left[e^{ i\rho\Phi(\tau,x)}\left[(L_{\A_{1,\rho},q_1}-q_1)c_{1,\rho}(\tau,x)+i\rho(\A_1-\A_{1,\rho})\nabla^{\g}\Phi\, c_{1,\rho}(\tau,x)\right]\right]\,d\tau}_{I}\\
&+\underbrace{\int_0^te^{ i\rho\Phi(\tau,x)}q_1c_{1,\rho}(\tau,x)\,d\tau}_{II}.\end{aligned}\ee
To analyze the terms $I$ and $II$ we will integrate by parts in the $\tau$ variable and note that by equation \eqref{phase} we have
\bel{nonvanishing}
\pd_\tau \Phi(\tau,x)=\frac{1}{\sqrt{2}}\neq 0.
\ee
For the term $I$, using the fact that $\A\in W^{1,1}(0,T;L^2(M))$ and equation \eqref{c2}, we can integrate by parts, with respect to $\tau \in(0,t)$, and write
\bel{l1h}\begin{aligned}\frac{\sqrt{2}}{2}\,I&= -i\rho^{-1}e^{i\rho\Phi(t,x)}\left[(L_{\A_1,q_1}-q_1)c_{1,\rho}(t,x)+i\rho\left( (\A_1-\A_{1,\rho})\nabla^{\g}\Phi)\right) c_{1,\rho}(t,x)\right]\\
&\ \ \ \ +i\rho^{-1}\int_0^t\left[e^{ i\rho\Phi(\tau,x)}\left[\partial_t\A_1\nabla^{\g}c_{1,\rho}(\tau,x)+(L_{\A_1,q_1}-q_1)\pd_tc_{1,\rho}(\tau,x)\right]\right]\,d\tau\\
&\ \ \ \ -\int_0^t\left[e^{ i\rho\Phi(\tau,x)}\left[\left( (\A_1-\A_{1,\rho})\nabla^{\g}\Phi)\right) \partial_t c_{1,\rho}(\tau,x)\right]\right]\,d\tau\\
&\ \ \ \ -\int_0^t\left[e^{ i\rho\Phi(\tau,x)}\left[\left( (\pd_t\A_1-\pd_t\A_{1,\rho})\nabla^{\g}\Phi )\right) c_{1,\rho}(\tau,x)\right]\right]\,d\tau\\
&=S_1+S_2+S_3+S_4.\end{aligned}\ee
For the term $S_1$, we can apply \eqref{cc} and \eqref{mollified} to write
$$\begin{aligned}\norm{S_1}_{L^{p_1}(0,T;L^2(M))}&\leq C\rho^{-1}\norm{c_{1,\rho}}_{W^{2,\infty}(\M)}+\left(\norm{\A_1-\A_{1,\rho}}_{L^{p_1}(0,T;L^2(M))}\right)\norm{c_{1,\rho}}_{L^{\infty}(\M)}\\
&\leq C(\rho^{-\frac{1}{2}}+\norm{\A_1-\A_{1,\rho}}_{L^{p_1}(0,T;L^2(M))})=o(1).\end{aligned}$$
For the term $S_2$ we similarly write
\bel{l1j}\begin{aligned}&\norm{\rho^{-1}\int_0^t\left[e^{ i\rho\Phi(\tau,x)}\left[\partial_t\A_1\nabla^{\g}c_{1,\rho}(\tau,x)+(L_{\A_1,q_1}-q_1)\pd_tc_{1,\rho}(\tau,x)\right]\right]\,d\tau}_{L^{p_1}(0,T;L^2(M))}\\
&\leq C\rho^{-1}(\norm{\A_1}_{W^{1,1}(0,T;L^2(M))})\norm{c_{1,\rho}}_{W^{2,\infty}(\M))}\leq C\rho^{-\frac{1}{2}},\end{aligned}\ee
with $C$ independent of $\rho$. For the terms $S_3$ and $S_4$, let us first assume that $\A_1\in \mathcal C^2([0,T];L^2( M))$. Then, integrating by parts with respect to $\tau\in(0,t)$ and applying \eqref{cc}, \eqref{mollified}, we have
$$\norm{\int_0^t\left[e^{ i\rho\Phi(\tau,x)}\left[\left( (\A_1-\A_{1,\rho})\nabla^{\g}\Phi \right) \partial_t c_{1,\rho}(\tau,\cdot)\right]\right]\,d\tau}_{L^{p_1}(0,T;L^2(M))}\leq C\rho^{-\frac{1}{2}},$$
$$\norm{\int_0^t\left[e^{ i\rho\Phi(\tau,x)}\left[\left( (\pd_t\A_1-\pd_t\A_{1,\rho})\nabla^{\g}\Phi \right) c_{1,\rho}(\tau,\cdot)\right]\right]\,d\tau}_{L^{p_1}(0,T;L^2(M))}\leq C\rho^{-\frac{1}{2}},$$
with $C$ independent of $\rho$. Then, applying  the density of $\mathcal C^2([0,T];L^2( M))$ in $W^{1,1}(0,T;L^2(M)$, we deduce that 
$$\lim_{\rho\to \infty} \|S_3\|_{L^{p_1}(0,T;L^2(M))}=\lim_{\rho\to \infty}\|S_4\|_{L^{p_1}(0,T;L^2(M))}=0.$$
Combining the above estimates we conclude that
$$\lim_{\rho\to+\infty}\norm{I}_{L^{p_1}(0,T;L^2(M))}=0.$$
Moreover, in a similar way to the terms $S_3$ and $S_4$, using a density argument combined with \eqref{cc}, we have
$$\lim_{\rho\to+\infty}\norm{II}_{L^{p_1}(0,T;L^2(M))}=0.$$
This completes the proof of estimate~\ref{GObis}.
\end{proof}

\section{Reduction to the light ray transform and the proof of uniqueness}
\label{uniqueness}

\subsection{Reduction to the light ray transform of $\A_1-\A_2$ and proof of Theorem~\ref{t1}}

Suppose $\A_j,q_j$ for $j=1,2$ satisfy regularity conditions \eqref{regularity} and consider their extensions to $\hat{\M}$ and smooth approximations $\A_{j,\rho}$ satisfying \eqref{mollified}. We assume that $\Lambda_{\A_1,q_1}=\Lambda_{\A_2,q_2}$ and proceed to show that for every $\beta \subset \R \times M$ the following identity holds:
\bel{lightray A}
\mathcal L_{\beta} \A =0, 
\ee
where $\A:=\A_1-\A_2$. We start by considering a maximal null geodesic $\beta \subset \mathcal D$ and extend it to $\hat{\M}$. Define $u_j$ in energy space \eqref{energyspace} to be solutions of \eqref{Gsol} taking the form \eqref{go1}-\eqref{go2} with the properties described in the previous section. Let 
$$f_1:=u_1|_{(0,T)\times \pd M}\in H^1_0((0,T]\times \pd M)\quad \text{and}\quad f_2:=u_2|_{(0,T)\times \pd M} \in H^1_0([0,T)\times \pd M).$$ 
Applying Lemma~\ref{allesandrini} we deduce that:
\bel{asymp uniq}
0=\langle (\Lambda_{\A_1,q_1}-\Lambda_{\A_2,q_2})f_1,f_2\rangle=\int_\M \left[\frac{u_2 \A\nabla^{\g}u_1-u_1 \A\nabla^{\g}u_2}{2}+(q-\frac{1}{2}\div_{\g}\A)u_1u_2\right]\,dV_{\g},
\ee
where $q:=q_1-q_2$. Using the Sobolev embedding \eqref{soem}, and the bounds \eqref{cc}-\eqref{GO2}, we write
\bel{corbound}
\begin{aligned}
|\rho^{-1}\int_{\M} R_{j,\rho}\A \nabla^{\g}R_{k,\rho}\,dV_{\g}|&\lesssim \rho^{-1}\|\A\|_{L^{\infty}(\M)}\|R_{k,\rho}\|_{L^2(\M)}\|R_{j,\rho}\|_{H^1(\M)}=o(1),\\
|\rho^{-1}\int_{\M}e^{\pm i\rho\Phi}Q R_{k,\rho}c_{j,\rho}\,dV_{\g}|&\lesssim \rho^{-1}\|Q\|_{L^{p_1}(0,T;L^{p_2}(M))}\|R_{k,\rho}\|_{\CI(0,T;L^2(M))}=o(\rho^{-1}),\\
|\rho^{-1}\int_{\M}QR_{j,\rho}R_{k,\rho}\,dV_{\g}|&\lesssim\rho^{-1}\|Q\|_{L^{p_1}(0,T;L^{p_2}(M))}\|R_{j,\rho}\|_{\CI(0,T;L^2(M))}\|R_{k,\rho}\|_{\CI(0,T;H^1(M))}=o(1).
\end{aligned}
\ee
for $j,k=1,2$ and $Q=q-\frac{1}{2}\div_{\g}\A$. Dividing equation \eqref{asymp uniq} by $\rho$, using \eqref{go1}-\eqref{go2} and applying the latter bounds, we observe that
\[
\lim_{\rho \to \infty} \int_{\M} \A\nabla^{\g}\Phi\, c_{1,\rho}c_{2,\rho}\,dV_{\g} =0.
\]
Recall from \eqref{cc1}-\eqref{cc2} that $c_{k,\rho}$ are compactly supported on $\mathcal V_\beta$. Recalling that $\A=0$ outside of $(0,T)\times M$ (both $\A_1$, $\A_2$ vanish there), and additionally using \eqref{smoothapp}, we write
\[
\lim_{\rho \to \infty} \int_{\mathcal V_\beta} \A_{\rho}\nabla^{\g}\Phi\, c_{1,\rho}c_{2,\rho}\,dV_{\g} =0.
\]
which reduces to
\[
\lim_{\rho \to \infty} \int_{(a_0,b_0)\times B(0,\delta)} (\A_{\rho})_0(z^0,z')\,\chi(\frac{|z'|}{\delta})^2\exp\left(\frac{1}{2}\int_{a_0}^{z^0}(\A_{\rho})_0(s,z')\,ds\right)\,dz^0\,dz'=0,
\]
where $(\A_{\rho})_0=\A\nabla^{\g}\Phi$. Observing that 
$$(\A_{\rho})_0(z^0,z')\exp\left(\frac{1}{2}\int_{a_0}^{z^0}(\A_{\rho})_0(s,z')\,ds\right)=\frac{d}{dz^0}\exp\left(\frac{1}{2}\int_{a_0}^{z^0}(\A_{\rho})_0(s,z')\,ds\right),$$
together with \eqref{cont} and vanishing of $\A$ in the exterior of $\M$, we simplify the former equation to obtain
\[
\int_{B(0,\delta)} \chi(\frac{|z'|}{\delta})^2\exp\left(\frac{1}{2}\int_{a_0}^{b_0}(\A)_0(s,z')\,ds\right)\,dz'=0,
\]
Finally, by taking $\delta \to 0$, and observing that $(\A)_0(s,0)=\A\dot{\beta}$, we observe that
\[
\mathcal L_{\beta} \A\in 4\pi i\mathbb Z.
\]
Note that the above claim holds for any null geodesic $\beta \subset \mathcal D$. Recall from the hypothesis of Theorem~\ref{t1} that $\A$ is supported on the set $\mathcal E$. Thus, we can conclude that the latter equality holds for any null geodesic in $\R \times M$. Let $\beta=(s_0+t,\gamma(t))$ for some $s_0$ and consider a one-parameter family of null geodesics $\beta_s=(s_0+s+t,\gamma(t))$. Since $\A$ is continuous and since $\mathcal L_{\beta_s}=0$ for $s$ large, we conclude that equality \eqref{lightray A} holds. 
Applying statement (ii) in Proposition~\ref{lightray} completes the proof of Theorem~\ref{t1}.

\subsection{Reduction to the light ray transform of $q_1-q_2$ and proof of Theorem~\ref{t2}}

We will assume throughout this section that the additional regularity assumptions \eqref{additional} hold. Applying Theorem~\ref{t1} implies that there exists $\psi \in \CI_0^1(\M)$ such that $\A_1=\A_2+\bar{d}\psi$. Clearly,
$$ \Delta_{\g} \psi =\div_{\g} (\A_1-\A_2) \in L^{p_1}(0,T;L^{\infty}(M)).$$
Let us now define $\tilde{\A}_2=\A_2+\bar{d}\psi$ and $\tilde{q}_2=q_2+\frac{1}{2}\Delta_{\g}\psi-\frac{1}{2}\A_2\nabla^{\g}\psi-\frac{1}{4}\langle\nabla^{\g}\psi,\nabla^{\g}\psi\rangle_{\g}$. Lemma~\ref{DNgauge} applies to show that 
\bel{dneq2}\Lambda_{\A_1,q_1}=\Lambda_{\tilde{\A}_2,\tilde{q}_2}=\Lambda_{\A_1,\tilde{q}_2}.\ee
Analogously to the previous section, we start by considering a null geodesic $\beta \subset \mathcal D$ and extend it to $\hat{\M}$. Define $u_j$ in energy space \eqref{energyspace} to be solutions of \eqref{Gsol} corresponding to differential operators $L_{\A_1,q_1}$ and $L^*_{\A_1,\tilde{q}_2}$, taking the form \eqref{go1}-\eqref{go2} and with the properties described in Section~\ref{GOsection}. Let 
$$f_1:=u_1|_{(0,T)\times \pd M}\in H^1_0((0,T]\times \pd M)\quad \text{and}\quad f_2:=u_2|_{(0,T)\times \pd M} \in H^1_0([0,T)\times \pd M).$$ 
Applying Lemma~\ref{allesandrini} again, we deduce that:
\bel{asymp uniq2}
0=\langle (\Lambda_{\A_1,q_1}-\Lambda_{\A_1,\tilde{q}_2})f_1,f_2\rangle=\int_\M qc_{1,\rho}c_{2,\rho}\,dV_{\g},
\ee
where $q:=q_1-\tilde{q}_2 \in L^{p_1}(0,T;L^{\infty}(M))$. Recall that $c_{1,\rho}, c_{2,\rho}$ are supported in the tubular set $\mathcal V_\beta$ near the null geodesic $\beta$. Estimate \eqref{GObis} implies that
\[
\begin{aligned}
|\int_{\M}qc_{k,\rho}R_{j,\rho}\,dV_{\g}|&\leq\|q\|_{L^{p_1}(0,T;L^2(M))}\|c_{k,\rho}\|_{L^{\infty}(\M)}\|R_{j,\rho}\|_{\CI(0,T;L^2(M))}=o(1),\\
|\int_{\M}qR_{1,\rho}R_{2,\rho}\,dV_{g}|&\leq \|q\|_{L^{p_1}(0,T;L^{\infty}(M))}\|R_1\|_{\CI(0,T;L^2(M))}\|R_2\|_{\CI(0,T;L^2(M))}=o(1).
\end{aligned}
\]
We now use the $z$ coordinate system and note that by taking the limit as $\rho \to \infty$ and using equations \eqref{cc1}-\eqref{cc2} with the preceding correction term bounds, we have
$$\int_{(a_0,b_0)\times B(0,\delta)} q(z^0,z') \chi(\frac{|z'|}{\delta})^2 \,dz^0\,dz'=0.$$
The arguments in Section~\ref{FIO} apply to deduce that 
$$ \mathcal L_{\beta} \,q =0.$$
Together with statement (i) in Proposition~\ref{lightray}, we conclude that equation \eqref{t2c} holds.

\section{Inversion of the light ray transform}
\label{lightray section}
This section is concerned with the proof of Proposition~\ref{lightray}. Recalling Section~\ref{main}, we will identify maximal null geodesics $\beta \subset \R \times M$ with triplets $(s,x,v) \in \R \times \pd_-SM$. Let us first recall the unique inversion of light ray transform on smooth functions. This is reproduced here as some of the arguments are necessary for the extension of the proof to $L^1(0,T;L^2(M))$ functions.
\subsubsection{Inversion of light ray transform for smooth functions}
For $f \in C_c^\infty(\R \times M)$, the transform $\mathcal L f(s,x,v)$ is compactly supported in $s$. Inversion of $\mathcal L$ is based on the following Fourier slicing in time
\begin{align*}
&\widehat{\mathcal L f}(\tau, x,v) = \int_\R e^{-i\tau s} \mathcal Lf(s,x,v)\, ds
= \int_0^{\tau_+(x,v)} \int_\R e^{-i\tau s} f(r+s, \gamma(r;x,v)) \,ds\, dr
\\&
= \int_0^{\tau_+(x,v)} e^{i\tau r} \int_\R e^{-i\tau t} f(t, \gamma(r;x,v))\, dt\, dr
= \int_0^{\tau_+(x,v)} e^{i\tau r} \widehat f(\tau, \gamma(r;x,\xi)) \,dr.
\end{align*}
In particular, $
\widehat{\mathcal L f}(0, x,v) = \mathcal I(\widehat f(0,\cdot))(x,v).
$
Straightforward differentiation gives the following lemma.

\begin{lem}\label{lem_Fourier_slice}
For $f \in C_c^\infty(\R \times M)$, $k=0,1,\dots$, and $(x,v) \in \pd_- SM$ it holds that 
    \begin{align}\label{Fourier_slice}
\pd_\tau^k \widehat{\mathcal L f}(\tau, x,v)|_{\tau = 0}
&= \mathcal I(\pd_\tau^k\widehat f(\tau,\cdot)|_{\tau = 0})(x,v) + \sum_{j=0}^{k-1} \binom k j \mathcal R_{k-j}(\pd_\tau^k\widehat f(\tau,\cdot)|_{\tau = 0})(x,v),
    \end{align}
where 
    \begin{align*}
\mathcal R_j f(x,v) &=
\int_0^{\tau_+(x,v)} (ir)^{j} f(\gamma(r,x,v))\,dr,
\quad f \in C_c^\infty(M).
    \end{align*}
\end{lem}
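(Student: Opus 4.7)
The plan is to start from the Fourier-slice formula derived immediately before the lemma, namely
\[
\widehat{\mathcal L f}(\tau, x,v) = \int_0^{\tau_+(x,v)} e^{i\tau r}\, \widehat f(\tau, \gamma(r;x,v))\, dr,
\]
and differentiate $k$ times in $\tau$ under the integral sign. Since $f \in C_c^\infty(\R \times M)$ the partial Fourier transform $\widehat f(\tau, y)$ and all its $\tau$-derivatives are Schwartz in $\tau$ uniformly in $y \in M$, and the $r$-integration runs over the bounded interval $[0, \tau_+(x,v)]$ with $\tau_+(x,v) < \operatorname{Diam}(M)$; hence differentiation under the integral and evaluation at $\tau=0$ are fully justified.

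Applying the Leibniz rule to the product $e^{i\tau r} \widehat f(\tau, \gamma(r;x,v))$ in the $\tau$-variable yields
\[
\pd_\tau^k\bigl[e^{i\tau r}\widehat f(\tau,\gamma(r;x,v))\bigr]
= \sum_{j=0}^{k}\binom{k}{j}\,(ir)^{k-j}\,e^{i\tau r}\,\pd_\tau^{j}\widehat f(\tau,\gamma(r;x,v)).
\]
Evaluating at $\tau=0$ the exponential collapses to $1$, so that
\[
\pd_\tau^k \widehat{\mathcal L f}(\tau,x,v)\big|_{\tau=0}
= \sum_{j=0}^{k}\binom{k}{j}\int_0^{\tau_+(x,v)}(ir)^{k-j}\,\pd_\tau^{j}\widehat f(\tau,\cdot)|_{\tau=0}\,(\gamma(r;x,v))\,dr.
\]

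The $j=k$ term has $(ir)^{0}=1$, and the resulting integral is exactly $\mathcal I\bigl(\pd_\tau^{k}\widehat f(\tau,\cdot)|_{\tau=0}\bigr)(x,v)$ by the definition of the geodesic ray transform. For $0\leq j\leq k-1$ the factor $(ir)^{k-j}$ is precisely the weight in the definition of $\mathcal R_{k-j}$, so each of these terms equals $\binom{k}{j}\,\mathcal R_{k-j}\bigl(\pd_\tau^{j}\widehat f(\tau,\cdot)|_{\tau=0}\bigr)(x,v)$. Recombining the two contributions yields the stated identity \eqref{Fourier_slice} (with the obvious reading that the argument of $\mathcal R_{k-j}$ inside the sum is $\pd_\tau^{j}\widehat f(\tau,\cdot)|_{\tau=0}$).

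There is no genuine obstacle here: the entire content is the Leibniz rule plus the recognition that the $j=k$ summand is $\mathcal I$ applied to $\pd_\tau^k \widehat f(0,\cdot)$ while each $j<k$ summand is the weighted transform $\mathcal R_{k-j}$ applied to $\pd_\tau^j \widehat f(0,\cdot)$. The only care needed is the standard dominated-convergence/compact-support justification to interchange $\pd_\tau^k$ with the $r$-integral, which is immediate from $f \in C_c^\infty(\R \times M)$ and the boundedness of $\tau_+$ on a simple manifold.
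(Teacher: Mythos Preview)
Your proof is correct and matches the paper's approach exactly: the paper itself simply states that the lemma follows from ``straightforward differentiation'' of the Fourier-slice identity, which is precisely the Leibniz-rule computation you carry out. You also correctly note the evident typo in the statement, where the argument of $\mathcal R_{k-j}$ in the sum should be $\pd_\tau^{j}\widehat f(\tau,\cdot)|_{\tau=0}$ rather than $\pd_\tau^{k}\widehat f(\tau,\cdot)|_{\tau=0}$.
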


If $\mathcal I$ is injective then $\mathcal L f = 0$ implies that $\pd_\tau^k\widehat f(\tau,\cdot)|_{\tau = 0} = 0$ for all $k=0,1,\dots$.
As $f$ is compactly supported in $t$, the Fourier transform $\widehat f$ is analytic in $\tau$. Hence $f=0$ in this case. 
\subsubsection{A localization property}

We have the following natural localization property.

\begin{lem}\label{lem_loc}
Let $U \subset \R$ and $V \subset \pd_- SM$ be open. 
Define $W$ to be the set of points $(t,x) \in \R \times M$ such that 
$t = r + s$ and $x = \gamma(r;y,v)$ for some $r \in [0, \tau_+(y,v)]$, $s \in U$ and $(y,v) \in V$.
Suppose that $\chi \in C^\infty(\R \times M)$ satisfies $\chi|_W = 1$. Then 
$$
\mathcal L f|_{U \times V} = \mathcal L (\chi f)|_{U \times V}, \quad f \in \E'(\R \times M).
$$
In particular, for any $f \in \E'(\R \times M)$ there are $a,b \in \R$ such that the support of $\mathcal L f$ is contained in $[a,b] \times \pd_-SM$.
\end{lem}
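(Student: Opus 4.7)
The overall strategy is to verify the identity $\mathcal L f|_{U\times V}=\mathcal L(\chi f)|_{U\times V}$ first for smooth compactly supported $f$ by direct inspection of the defining integral, and then to extend it to general distributions by mollification, exploiting the continuous extension $\mathcal L:\E'(\R\times \hat M)\to \D'(\R\times \pd_-S\hat M)$ recorded at the end of Section~\ref{FIO}. The ``in particular'' clause will then be a corollary, obtained by choosing $U$ so that the associated set $W$ misses $\supp f$ altogether and applying the main identity with a cutoff $\chi$ that kills $f$.

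The smooth case is essentially tautological: for $f\in \CI_c^\infty(\R\times M)$ and any $(s,y,v)\in U\times V$, every point $(r+s,\gamma(r;y,v))$ on the null-geodesic segment of integration, $r\in[0,\tau_+(y,v)]$, lies in $W$ by the very construction of $W$. Since $\chi\equiv 1$ on $W$, the integrands defining $\mathcal L f(s,y,v)$ and $\mathcal L(\chi f)(s,y,v)$ agree pointwise in $r$, so the two quantities coincide. To promote this to $f\in \E'(\R\times M)\subset \E'(\R\times \hat M)$, I would introduce a standard mollification $f_\epsilon:=f*\phi_\epsilon\in \CI_c^\infty(\R\times \hat M)$ with $f_\epsilon\to f$ in $\E'(\R\times \hat M)$. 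Since multiplication by the fixed smooth function $\chi$ is sequentially continuous on $\E'$ and since the extended $\mathcal L$ is continuous from $\E'$ into $\D'$, it follows that both $\mathcal L f_\epsilon \to \mathcal L f$ and $\mathcal L(\chi f_\epsilon)\to \mathcal L(\chi f)$ in $\D'(\R\times \pd_-S\hat M)$. The smooth identity applied to each $f_\epsilon$ then passes to the limit on the open set $U\times V$. This density step is the only point requiring genuine care, since it relies on the FIO framework of Section~\ref{FIO}; the rest is just unwinding definitions.

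For the support assertion, I would use that $\supp f$ is compact, hence contained in $[c,d]\times M$ for some $c<d$. Setting $a:=c-\textrm{Diam}(M)$ and $b:=d$, the bound $\tau_+(y,v)<\textrm{Diam}(M)$ ensures that for every $s\in \R\setminus[a-1,b+1]$, every $(y,v)\in \pd_-SM$ and every $r\in[0,\tau_+(y,v)]$, the point $(r+s,\gamma(r;y,v))$ stays outside a fixed neighborhood of $[c,d]\times M$. Taking $U:=\R\setminus[a-1,b+1]$ and $V:=\pd_-SM$, the associated set $W$ is therefore disjoint from a neighborhood of $\supp f$, and one may choose a smooth $\chi$ equal to $1$ on $W$ and to $0$ on $\supp f$. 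The product $\chi f$ then vanishes identically, so the main identity gives $\mathcal L f|_{U\times V}=0$, whence $\supp \mathcal L f\subset [a-1,b+1]\times \pd_-SM$, as required.
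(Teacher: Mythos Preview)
Your proof is correct and follows essentially the same approach as the paper: verify the identity on smooth compactly supported functions by direct inspection of the integral, then pass to $\E'$ by density using the continuity of $\mathcal L$ established in Section~\ref{FIO}, and finally deduce the support statement by choosing $\chi$ to vanish on $\supp f$. The only cosmetic differences are that the paper invokes an abstract approximating sequence $f_j\to f$ in $\E'$ rather than a mollification, and handles the two half-lines $(-\infty,a)$ and $(b,\infty)$ separately rather than treating $U=\R\setminus[a-1,b+1]$ at once.
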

\begin{proof}
The claimed localization clearly holds when $f \in C^\infty_0(\R \times M)$. For a distribution $f \in \E'(\R \times M)$ we choose a sequence of functions $f_j \in C_0^\infty(\R \times M)$ such that $f_j \to f$ in $\E'(\R \times M)$. Then 
$$
\mathcal L f|_{U \times V} = 
\lim_{j\to\infty} \mathcal L f_j|_{U \times V} = 
\lim_{j\to\infty} \mathcal L (\chi f_j)|_{U \times V} = 
\mathcal L (\chi f)|_{U \times V}.
$$

There is $a_0 \in \R$ such that $f = 0$ in $(-\infty, a_0) \times M$. If $s < a_0 - T$ then the non-trapping assumption implies that the light ray $\beta(r) = (r+s, \gamma(r;y,v))$ does not intersect $\supp(f)$ for any $(y,v) \in \pd_-SM$. Now setting $U = (-\infty, a)$, with $a = a_0 - T - 1$, and $V = \pd_-SM$, we can choose $\chi$ so that $\chi = 1$ in $W$ and $\chi = 0$ in $\supp(f)$.
Then $\mathcal L f$ vanishes in $(-\infty, a) \times \pd_-SM$. Similarly we can get an upper bound for the support with respect to time. 
\end{proof}
\subsubsection{On partial Fourier transform in time}

On a product manifold $\R \times M$ we define the partial Fourier transform in time by 
$$
\pair{\widehat f(z), \phi}_{\E' \times C^\infty(M)}
= \pair{f, e^{-izt} \otimes \phi}_{\E' \times C^\infty(\R \times M)}, \quad f \in \E'(\R \times M),\ z \in \C.
$$
It follows from \cite[Th. 2.1.3]{H1} that $z \mapsto \pair{\widehat f(z), \phi}$
is smooth and for all $j=1,2,\dots$,
$$
\pd_z^j \pair{\widehat f(z), \phi} = \pair{f, \pd_z^j e^{-izt} \otimes \phi}, \quad \pd_{\bar z} \pair{\widehat f(z), \phi} = 0.
$$
The latter identity says that $z \mapsto \pair{\widehat f(z), \phi}$ is analytic, and the former implies that 
the map $f \mapsto \pd_z^j \widehat f(z)|_{z=0}$ is continuous from $\E'(\R \times M)$ to $\E'(M)$.

Let $a,b \in \R$ and consider $L^2((a,b) \times M)$
as a subspace of $L^2(\R; E)$ with $E = L^2(M)$.
Then the above definition of $\widehat f(z)$ coincides with
the usual definition of the Fourier transform on $L^2(\R;E)$. Let us recall that the Fourier transform on $L^2(\R;E)$ is a unitary isomorphism as $E$ is a Hilbert space, see e.g. the discussion on p. 16 of \cite{LM}. It is also easy to see that the map $f \mapsto \pd_z^j \widehat f(z)|_{z=0}$ is continuous from $L^2((a,b) \times M)$ to $L^2(M)$.

\subsubsection{Geodesic ray transform on $L^2$ functions}

Since $\pd M$ is strictly convex, $\mathcal I$ extends as a map from $L^2(M)$ to $L^2(\pd_- SM)$ with a suitably chosen measure on $\pd_- SM$ (see for example \cite[Th. 4.2.1]{S}). In what follows, we will therefore assume that $\mathcal I$ is a map from $L^2(M)$ to $L^2(\pd_- SM)$. 

\subsubsection{The remainder operator $\mathcal R_j$ on $L^2$ functions}

Let us consider the operators $\mathcal R_j$, $j=1,2,\dots$, defined in Lemma \ref{lem_Fourier_slice}.
For $f \in C_c^\infty(M)$ it holds that
$$
|\mathcal R_j f(x,v)| \le L^j \int_0^{\tau_+(x,v)} |f(\gamma(r,x,v))| \,dr = L^j\, \mathcal I(|f|)(x,v),
\quad (x,v) \in \pd_- SM,
$$
where $L=\textrm{Diam}(M)$. Therefore 
$$
\norm{\mathcal R_j f}_{L^2(\pd_-SM)} 
\le 
L^j \norm{\mathcal I(|f|)}_{L^2(\pd_-SM)} 
\le
C \norm{f}_{L^2(M)},
$$
and $\mathcal R_j$ has a unique continuous extension as a map from $L^2(M)$ to $L^2(\pd_- SM)$.

\subsubsection{The inversion}

Let $f \in L^1((0,T);L^2(M))$ and choose a sequence of functions $f_j \in C_c^\infty((0,T) \times M)$ such that $f_j \to f$ in $L^1((0,T);L^2(M))$. Then $\mathcal L f_j \to \mathcal L f$ in $\D'(\R \times \mathbb \pd_- SM)$. As $\mathcal L f$ and $\mathcal L f_j$ are compactly supported in time by Lemma \ref{lem_loc}, also $\pd_z^k \widehat{\mathcal L f_j}(0) \to \pd_z^k \widehat{\mathcal L f}(0)$ in $\D'(\pd_- SM)$.
Furthermore, $\pd_z^k \widehat{f_j}(0) \to \pd_z^k \widehat{f}(0)$ in $L^2(M)$.
Finally, using the $L^2$-continuity of $\mathcal I$ and $\mathcal R_k$, we see that the identity (\ref{Fourier_slice}), that holds for each $f_j$, holds also for $f$ by passing to the limit. 

Recalling that $\mathcal I$ is injective on $L^2(M)$ for simple manifolds $(M,g)$ (see for example \cite{AS} or \cite{Sh}), we see that  $\mathcal L f = 0$ implies that $\pd_z^k \widehat{f}(0) = 0$, as a function in $L^2(M)$, for all $k=0,1,\dots$. For any $\phi \in C_c^\infty(M)$ all the derivatives of the analytic function $\pair{\widehat{f}(z), \phi}$ vanish at the origin. Hence $\pair{\widehat{f}(z), \phi}$ vanishes identically. Therefore $\widehat{f}$ vanishes as a function in $L^2(\R;E)$ with $E = L^2(M)$. We conclude that $f = 0$.

\section*{Acknowledgments}
A.F acknowledges the support from the EPSRC grant EP/P01593X/1 and Y.K. acknowledges the support from the Agence Nationale de la Recherche grant ANR-17-CE40-0029. The authors would also like thank Lauri Oksanen for helpful discussions and his contributions to parts of this paper.


\begin{thebibliography}{99}
%
\bibitem{Al}{\sc S. Alinhac}, {\em Non-unicit\'{e} du probl\'{e}me de Cauchy}, Ann. of Math., 117 (2) (1983), 77-108.
\bibitem{AB} {\sc S. Alinhac and M.S. Baouendi}, {\em A non uniqueness result for operators of principal type}, Math.
Z., \textbf{220} (1995), 561-568.
\bibitem{AS}
{\sc Y. Assylbekov and P. Stefanov}, {\em A sharp stability estimate for the geodesic ray transform}, arXiv preprint (2018).
\bibitem{AP}{\sc K. Astala, L.  P\"aiv\"arinta}, {\em Calder\'on's inverse conductivity problem in the plane}, Ann. of Math., \textbf{163} (1) (2006), 265-299. 
\bibitem{Bel87}{\sc M. Belishev}, {\em An approach to multidimensional inverse problems for the wave equation}, Dokl. Akad. Nauk SSSR, \textbf{297} (1987), 524-527.
\bibitem{Bel}{\sc M. Belishev}, {\em Recent progress in the boundary control method}, Inverse Problems, 23 (2007), R1-R67.
\bibitem{Bel92} {\sc M. Belishev and Y. Kurylev}, {\em To the reconstruction of a Riemannian manifold via its spectral data (BC-method)}, Comm. Partial Differential Equations, \textbf{17} (1992), 767-804.
\bibitem{Be} {\sc M. Bellassoued}, {\em Stable determination of coefficients in the dynamical Schr\"odinger equation in a magnetic field}, Inverse Problems, \textbf{33} (2017), 055009.
\bibitem{BB}{\sc M. Bellassoued and I. Ben Aicha}, {\em Stable determination outside a cloaking region of two time-dependent coefficients in an hyperbolic equation from Dirichlet to Neumann map}, Journal of Mathematical Analysis and Applications, \textbf{449} (2017), 46-76.
\bibitem{BCY}{\sc M. Bellassoued, M. Choulli, M. Yamamoto}, {\em Stability estimate for an inverse wave equation and a multidimensional Borg-Levinson theorem}, J. Diff. Equat., \textbf{247} (2)  (2009), 465-494.
\bibitem{BD}{\sc M. Bellassoued and  D. Dos Santos Ferreira, } {\em Stability estimates for the anisotropic wave equation from the Dirichlet-to-Neumann map},
Inverse Probl. Imaging, \textbf{5}  (4) (2011), 745-773.
\bibitem{BJY}{\sc M. Bellassoued, D. Jellali, M. Yamamoto}, {\em  Lipschitz stability for a hyperbolic inverse problem by finite local boundary data}, Appl. Anal., \textbf{85} (2006),  1219-1243.
 \bibitem{A} {\sc I. Ben Aicha}, {\em  Stability estimate for hyperbolic inverse problem with time-dependent coefficient}, Inverse Problems, \textbf{31} (2015), 125010.
\bibitem{BKL}
{\sc R. Bosi, Y. Kurylev, M. Lassas}, {\em Stability of the unique continuation for the wave operator
via Tataru inequality and applications}, J. Differential Equations, 260 (2016), pp. 6451–6492. 
\bibitem{BK}{\sc  A. Bukhgeim, M. Klibanov}, {\em Global uniqueness of a class of multidimensional inverse problem},
Sov. Math.-Dokl.,
\textbf{24}
(1981), 244-247.
\bibitem{CK}{\sc P. Caro, Y. Kian}, {\em Determination of convection terms and quasi-linearities appearing in diffusion equations}, arXiv:1812.08495.
\bibitem{CR}{\sc P. Caro and K. M. Rogers}, {\em Global Uniqueness for The Calder\'on Problem with Lipschitz Conductivities}, Forum of Mathematics, Pi, \textbf{4} (2016),  p. e2. Cambridge University Press.
\bibitem{CK2}{\sc M. Choulli and Y. Kian}, {\em Logarithmic stability in determining the time-dependent zero order coefficient in a parabolic equation from a partial Dirichlet-to-Neumann map. Application to the determination of a nonlinear term}, J. Math. Pures Appl., 	\textbf{114} (2018), 235-261.
\bibitem{E1}{\sc G. Eskin},  {\em A new approach to hyperbolic inverse problems},  Inverse Problems, \textbf{22} no. 3 (2006), 815-831.
\bibitem{E2}{\sc G. Eskin}, {\em Inverse hyperbolic problems with time-dependent coefficients}, Commun. Partial Diff. Eqns., \textbf{32}  (11) (2007), 1737-1758.
\bibitem{E3}{\sc G. Eskin}, {\em Inverse problems for general second order hyperbolic equations with time-dependent coefficients},   Bull. Math. Sci., 	\textbf{7} (2017), 247-307.
\bibitem{FIKO}
{\sc A. Feizmohammadi, J. Ilmavirta, Y. Kian, L. Oksanen}, {\em Recovery of time dependent coefficients from boundary data for hyperbolic equations} (2019), arXiv Preprint.
\bibitem{H} {\sc B. Haberman}, {\em  Uniqueness in Calder\'on's problem for conductivities with unbounded gradient}, Comm. Math. Phys., \textbf{340} (2) (2015), 639-659.
\bibitem{HT} {\sc B. Haberman and D. Tataru}, {\em Uniqueness in Calder\'{o}n's problem with Lipschitz conductivities}, Duke Math. Journal, {\bf 162} (3) (2013), 497-516.
\bibitem{FIO1}
{\sc L. H\"ormander}, {\em Fourier integral operators}. I. Acta Mathematica, 127(1–2), 79–183. (1971)
\bibitem{H1}
{\sc L. H\"ormander}, {\em The analysis of linear partial differential operators I} (Vol. 256). Berlin: Springer-Verlag. (1996)
\bibitem{HK}{\sc G. Hu and Y. Kian}, {\em Determination of singular time-dependent coefficients for wave equations from full and partial data}, to appear in Inverse Problem and Imaging.
\bibitem{I}{\sc V. Isakov}, {\em Completness of products of solutions and some inverse problems for PDE}, J. Diff. Equat., \textbf{92} (1991), 305-316.
\bibitem{I1}{\sc  V. Isakov},  {\em An inverse hyperbolic problem with many boundary measurements},  Commun. Partial Diff. Eqns., \textbf{16 }(1991),  1183-1195.
\bibitem{I2}{\sc  V. Isakov},  {\em On uniqueness in inverse problems for semilinear parabolic equations}, Arch. Rat. Mech. Anal., \textbf{124} (1993), 1-12.
\bibitem{I3}{\sc  V. Isakov},  {\em Uniqueness of recovery of some quasilinear Partial differential equations}, Commun. Partial Diff. Eqns., \textbf{26} (2001), 1947-1973.
\bibitem{KKL} {\sc A. Katchalov, Y. Kurylev, M. Lassas}, {\em Inverse boundary spectral problems,} Chapman \& Hall/CRC Monogr. Surv. Pure Appl. Math.  123 Chapman \& Hall/CRC, Boca Raton, FL, 2001.
 \bibitem{Ki2}{\sc Y. Kian}, {\em Unique determination of a time-dependent potential for  wave equations from partial data}, Annales de l'IHP (C) Nonlinear Analysis, \textbf{34} (2017), 973-990.
 \bibitem{Ki4}{\sc Y. Kian}, {\em Recovery of time-dependent damping coefficients and potentials appearing in wave equations from partial data}, SIAM J. Math. Anal., \textbf{48} (6) (2016), 4021-4046.
\bibitem{Ki5}{\sc Y. Kian}, {\em On the determination of nonlinear terms appearing in semilinear hyperbolic equations }, preprint, arXiv:1807.02165.
\bibitem{KiOk}{\sc Y. Kian and L. Oksanen}, {\em Recovery of time-dependent coefficient on Riemanian manifold for hyperbolic equations}, to appear in International Math Research Notices, 	https://doi.org/10.1093/imrn/rnx263.
\bibitem{KOM}{\sc Y. Kian, L. Oksanen, M. Morancey}, {\em Application of the boundary control method to partial data Borg-Levinson inverse spectral problem}, MCRF, \textbf{9} (2019),  289-312.
\bibitem{KU} {\sc K. Krupchyk and G. Uhlmann}, {\em Uniqueness in an inverse boundary problem for a magnetic Schrodinger operator with a bounded magnetic potential}, Comm. Math. Phys., \textbf{327}  (2014), 993-1009.
\bibitem{KOP}
{\sc Y. Kurylev, L. Oksanen, G. P. Paternain}, {\em Inverse problems for the connection Laplacian}, J. Differential Geom., \textbf{110} (2018), no. 3, 457-494. 
 \bibitem{KS}{\sc Y. Kian and   E. Soccorsi}, 	{\em H\"older stably determining the time-dependent electromagnetic potential of the Schr\"odinger equation}, to appear in SIAM J. Math. Anal., arXiv:1705.01322.
\bibitem{LLT}{\sc I. Lasiecka, J-L. Lions, R. Triggiani}, {\em Non homogeneous boundary value problems for second order hyperbolic operators}
J. Math. Pures Appl., \textbf{65} (1986),  149-192.
\bibitem{LO} {\sc M. Lassas and L. Oksanen},  {\em An inverse problem for a wave equation with sources and observations on disjoint sets}, Inverse Problems,  \textbf{26} (2010), 085012.
\bibitem{LM}
{\sc J. L. Lions, E. Magenes}, {\em Non-homogeneous boundary value problems and applications}. Vol. I. New York: Springer-Verlag, (1972).
\bibitem{LM1}{\sc J-L. Lions and E. Magenes}, {\em Probl\`emes aux limites non homog\`enes et applications}, Vol. I, Dunod, Paris, 1968.
\bibitem{LOk}{\sc S. Liu and L. Oksanen},{\em A Lipschitz stable reconstruction formula for the inverse problem for the wave equation}, Trans. Amer. Math. Soc., 368 (2016), 319-335.
\bibitem{RS}{\sc A. G. Ramm and J. Sj\"ostrand}, {\em  An inverse problem of the wave equation}, Math. Z., \textbf{206} (1991), 119-130.
\bibitem{S}{\sc R. Salazar}, {\em  Determination of time-dependent coefficients for a hyperbolic inverse problem}, Inverse Problems, \textbf{29} (9) (2013), 095015.
\bibitem{Sh}
{\sc V. A. Sharafutdinov}, {\em Integral geometry of tensor fields}. VSP, Utrecht. (1994) 
\bibitem{Sp}{M. Spivak}, {\it A comprehensive introduction to differential geometry}, Vol I, Publish or Perish, USA, 1970.\bibitem{St}{\sc  P. Stefanov}, {\em Uniqueness of the multi-dimensional inverse scattering problem for time dependent potentials}, Math. Z., \textbf{201} (4) (1989), 541-559.
\bibitem{St}{\sc P. Stefanov}, {\em Uniqueness of the multi-dimensional inverse scattering problem for time dependent potentials}, Math. Z., \textbf{201} (4) (1989), 541-559.
\bibitem{SU1}
{\sc P. Stefanov and G. Uhlmann}, {\em Stability estimates for the hyperbolic Dirichlet to Neumann map in anisotropic
media}, J. Funct. Anal., 154 (1998), 330-358.
\bibitem{SU2}{\sc  P. Stefanov and G. Uhlmann}, {\em  Stable determination of the hyperbolic Dirichlet-to-Neumann map
for generic simple metrics},  International Math Research Notices
, {\bf 17} (2005), 1047-1061.
\bibitem{SY}{\sc P. Stefanov, Y. Yang}, {\em The inverse problem for the Dirichlet-to-Neumann map on Lorentzian manifolds}. Anal. PDE 11, no. 6, 1381--1414 (2018). 
\bibitem{Tataru}
{\sc D. Tataru}, {\em Unique continuation for solutions to PDE; between H\"{o}rmander’s theorem and Holmgren’s theorem},
Commun. Partial Diff. Eqns., \textbf{20} (1995), 855-884.
\bibitem{UV}
 {\sc G. Uhlmann and A. Vasy}, {\em The inverse problem for the local geodesic ray transform}, Invent. Math. 205 (2016), no. 1, 83–120. MR 3514959. 
\bibitem{W}{\sc A. Waters}, {\em  Stable determination of X-ray transforms of time dependent potentials from partial boundary data}, Commun. Partial Diff. Eqns., \textbf{39} (2014), 2169-2197.


\end{thebibliography}
\end{document}